\newcommand{\F}{\mathcal{F}}
\newcommand{\ephi}{\mathcal{E}_{\phi}^\xi}
\newcommand{\ez}{\mathcal{E}_0^\xi}
\newcommand{\norm}[1]{\left\|#1\right\|}
\newcommand{\A}{{\cal{A}}}
\newcommand{\R}{\mathbb{R}}
\newcommand{\tphil}{\mathbb{T}_{\phi L}}
\newcommand{\uo}{u_{\omega,\phi}}
\newcommand{\um}{u_{m,\phi}}
\newcommand{\us}{u_{s,\phi}}
\newtheorem{thm}{Theorem}[section]
\newtheorem{definition}[thm]{Definition}
\newtheorem{lemma}[thm]{Lemma}
\newtheorem{notation}[thm]{Notation}
\newtheorem{remark}[thm]{Remark}
\newtheorem{theorem}[thm]{Theorem}
\numberwithin{equation}{section}   
\numberwithin{thm}{section}
\numberwithin{figure}{section}
\def\Xint#1{\mathchoice
{\XXint\displaystyle\textstyle{#1}}%
{\XXint\textstyle\scriptstyle{#1}}%
{\XXint\scriptstyle\scriptscriptstyle{#1}}%
{\XXint\scriptscriptstyle\scriptscriptstyle{#1}}%
\!\int}
\def\XXint#1#2#3{{\setbox0=\hbox{$#1{#2#3}{\int}$}
\vcenter{\hbox{$#2#3$}}\kern-.5\wd0}}
\def\dashint{\Xint-}
\begin{document}
\title{Numerics and analysis of Cahn--Hilliard critical points}

\author{Tobias Grafke\footnote{University of Warwick},
Sebastian Scholtes\footnote{Technical University of Applied Sciences Augsburg},
Alfred Wagner\footnote{RWTH Aachen University}
and Maria G. Westdickenberg$^{\ddagger,}$
} \date{\today}
\maketitle

\begin{abstract}
We explore recent progress and open questions concerning local minima and saddle points of the Cahn--Hilliard energy in $d\geq 2$ and the critical parameter regime of large system size and mean value close to $-1$. We employ the String Method of E, Ren, and Vanden-Eijnden---a numerical algorithm for computing transition pathways in complex systems---in $d=2$ to gain additional insight into the properties of the minima and saddle point. Motivated by the numerical observations, we employ a method of Caffarelli and Spruck to study convexity of level sets in $d\geq 2$.
\end{abstract}

\noindent {\bf Keywords.} Cahn-Hilliard, complex energy landscapes, saddle points, String Method.\\

\section{Introduction}\label{S:intro}

The Cahn--Hilliard energy is a fundamental model of phase separation, with wide-ranging applications in materials science and other natural sciences. Introduced by Cahn and Hilliard to capture the mixing of a binary alloy, both the energy and the dynamics have been the subject of extensive numerical and analytical studies; we refer for instance to \cites{CH,CH3,NS} and the articles cited therein. Despite the large body of existing work on the model, significant open questions remain.

In this paper we analyze critical points of the renormalized energy
\begin{align}
\ephi(u)&=\int_{\tphil}\frac{\phi}{2}|\nabla u|^2+\frac{1}{\phi}\Big(G(u)-G(-1+\phi)\Big)\,dx\label{ephi}
\end{align}
on
\begin{align}\label{H-phi}
X_\phi:=\Big\{u\in H^1\cap L^4(\tphil): \dashint_{\tphil} u \, dx=-1+\phi\Big\},
\end{align}
where $\tphil$ is the $d$-dimensional torus with side length $\phi L$. Here $G$ is a nondegenerate double-well potential with minima $\pm 1$ (the standard choice being $G(u)=\frac14(1-u^2)^2$); see Remark~\ref{rem:G} below. We are interested in the so-called critical regime, that is, the regime such that
\begin{align*}
L\sim\phi^{-(d+1)/d} \quad\text{and}\quad\phi\ll 1.
\end{align*}
We recall the origins of the problem in Section \ref{S:detail} below.

In \cite{GWW} the existence of a nonconstant \emph{local energy minimizer} and \emph{saddle point} of the energy in the critical regime was established. Moreover, the following \emph{constrained energy minimizers} were introduced and analyzed:
functions $\uo$ that minimize $\ephi$ subject to
\begin{align}
 \uo\in X_\phi\quad\text{and}\quad \nu(\uo)=\omega,\label{vol}
\end{align}
where the ``volume'' of a function $u\in X_\phi$ is defined as
\begin{align}
\nu(u):=\int_{\tphil} \chi(u)\,dx,\label{nu}
\end{align}
for a smooth function $\chi$ such that $\chi(s)=0$ for  $s\leq 1-2\phi^{1/3}$ and $\chi(s)=1$ for  $s\geq 1-\phi^{1/3}$. Since it can be shown that volume-constrained minimizers are bounded above by $1+\phi^{1/3}$, $\nu$ roughly measures the volume $|\{u\approx 1\}|$.

While much is known about the shape of minimizers on $\R^d$ (see for instance \cite{GNN,BL,LN}), the shape of minimizers on bounded domains is less well understood; as seminal works on the torus, we mention \cite{K,Br}. The Cahn-Hilliard problem on the torus features a competition between the periodic boundary conditions and the energetic preference for spherical level sets. We are interested in \emph{understanding and quantifying this competition}. Note that this issue is more than a matter of mathematical/geometric curiosity; the appropriateness of the often-used sphericity assumption is of interest in nucleation theory in physics and engineering applications; see for instance \cite{L}.

Quantitative estimates
on the energy and shape of critical points were derived in \cites{GWW,GWW20}. For example the Fraenkel asymmetry of the superlevel sets of $\uo$ is shown to be bounded by $\phi^{1/6}$ \cite{GWW}; see Section \ref{S:detail} below for more detail. In addition, the volume-constrained minimizers are shown to be equal to their Steiner symmetrization \cite{GWW20} and hence have connected superlevel sets. It is \emph{plausible} that the saddle point (whose existence was pointed out already by Cahn and Hilliard in \cite{CH3} and is established via a mountain-pass argument in \cite{GWW}) is in fact equal to a volume-constrained minimizer of appropriate volume, however this has not yet been proved. Hence although bounds are obtained for the constrained minimizers, \cites{GWW,GWW20} stop short of establishing such bounds for the actual saddle point.

Given the open questions in the analysis, in this article we apply the String Method \cite{ERV} to look numerically at the local minimum and saddle point of the Cahn--Hilliard energy in $d=2$ in Section~\ref{sec:numerical-results}.
The String Method is a well-known algorithm for computing transition pathways in complex systems. Given two local minimizers of an energy, it locates a saddle point on the boundary of the corresponding basins of attraction. Alternatively, given two arbitrary states in distinct basins of attraction, the string evolves to find both the local minima of the basins and a saddle point between them. We review the method in Subsection \ref{ss:string} and summarize our numerical results in Subsection \ref{ss:numer}. Then in Section \ref{S:convex}, motivated in part by our numerical observations, we study convexity of the sub/superlevel sets.
In particular, the results of the String Method suggest that for both the minimum and saddle point there is a value $t_*\in(-1,1)$ such that
\begin{align*}
\text{the superlevel sets }\{u>t\}\text{ for $t\geq t_*$ and the sublevel sets }\{u<t\}\text{ for $t\leq t_*$}
\end{align*}
are convex.
While proving such a fact is currently out of reach, we show how the method of Caffarelli and Spruck \cite{CS} can be applied to obtain a partial result in this direction; see Proposition~\ref{convex5new}.

\begin{remark}\label{rem:G}
We assume that the double-well potential $G$ is smooth and satisfies
\begin{itemize}
  \item $G> 0$ for $x\neq \pm 1$ and $G(\pm 1)=0$,
  \item $G'\geq 0$ on $[-1,0]$, $G'\leq 0$ on $[0,1]$, and $G''(\pm 1)>0$.
\end{itemize}
\end{remark}
\begin{notation}
  For two vectors $v$ and $w$ and a Hessian matrix $A$, we will use the notation
$v\cdot A\cdot w=\sum_{i,j=1}^{n}v_{i}\,A_{ij}\,w_{j}$. Similarly, for a tensor $A=A_{ijk}$, we use $u\cdot (v\cdot A\cdot w)$ to denote:
\begin{align*}
  u\cdot (v\cdot A\cdot w)=\sum_{i,j,k=1}^{n}u_k v_i A_{i,j,k} w_j.
\end{align*}
\end{notation}

\section{Background and  open questions}\label{S:detail}

The original Cahn--Hilliard energy landscape consists of the energy
\begin{align}
 E(u):=\int_\Omega \frac{1}{2}|\nabla u|^2 + G(u) \,dx\label{E}
\end{align}
considered on the set
\begin{align}\notag
\Big\{u\in H^1\cap L^4(\Omega): \dashint_\Omega u \, dx=m\Big\},
\end{align}
where the mean value $m$ is a given value strictly between the minima of $G$. Studying the problem on a large domain $|\Omega|$ is equivalent, after a rescaling of space, to studying the energy
\begin{align*}
 E_\phi(u):=\int_{\Omega_1}\frac{\phi}{2}|\nabla u|^2+\frac{1}{\phi}G(u)\,dx
\end{align*}
for $\phi\ll 1$ on the fixed domain $\Omega_1$. It is well-known since the seminal work of Modica and Mortola \cite{MM} that for $m$ fixed and $\phi\downarrow 0$, the energy $E_\phi$ $\Gamma$-converges to the perimeter functional.

Droplet formation or nucleation in a low-density phase is more subtle, as is pointed out by Biskup, Chayes, and Kotecky in \cite{BCK}.
For the Cahn-Hilliard model, \cite{BGLN,CCELM}
probed the \emph{competition} between large system size and mean value close to $-1$. To fix ideas, consider the energy \eqref{E} with $\Omega=\mathbb{T}_L$, the torus of side-length $L$. It is shown in \cite{BGLN,CCELM} that mean value
\begin{align*}
 m:=-1+\phi
\end{align*}
with $\phi$ and $L$ scaling like
\begin{align*}
 \phi\sim L^{-d/(d+1)}
\end{align*}
is the \emph{critical parameter regime} in the following sense: There is a dimension-dependent constant $\xi_d>0$ such that for $\xi<\xi_d$ and
\begin{align*}
  \phi=\xi L^{-d/(d+1)},
\end{align*}
  \emph{the global minimizer} is the constant state $u\equiv -1+\phi$, while for $\xi>\xi_d$, \emph{the global minimizer} is \textbf{not} the constant state and is instead close in $L^p$ to a ``droplet-shaped'' function.

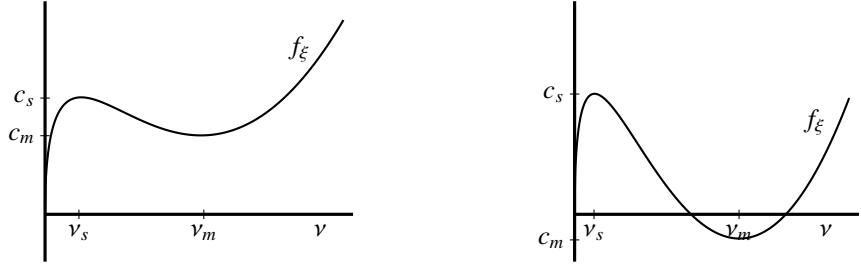
\begin{figure}
\begin{tikzpicture}[scale=0.63]
\hspace{1cm}
\centering
\draw [black, very thick] (5,0) -- (-1.5,0);
\draw [black, very thick] (-1.5,-1) -- (-1.5,4.5);
\draw[black, thick, domain=-1.5:4.8, samples=500] plot [smooth] (\x, {9.7*sqrt(0.4*(\x+1.5))-10.15*0.4*(\x+1.5)+2*0.18*(\x+1.5)*(\x+1.5)});
\node [below] at (1.85,0) {\small{$\nu_m$}};
\node [below] at (4.3,0) {\small{$\nu$}};
\node [left] at (-1.5,1.66) {\small{$c_m$}};
\node [left] at (-1.5,2.455) {\small{$c_s$}};
\draw (-1.6,2.455) -- (-1.4,2.455);
\draw (-1.6,1.66) -- (-1.4,1.66);
\node [left] at (4.3,3.5) {\small{$f_\xi$}};
\draw (2.35-0.5,0.1) -- (2.35-0.5,-0.1);
\draw (-0.285-0.5,0.1) -- (-0.285-0.5,-0.1);
\node [below] at (-0.285-0.5,0) {\small{$\nu_s$}};
\hspace{2cm}
\draw [black, very thick] (12+0.5,0) -- (6+0.5,0);
\draw [black, very thick] (6+0.5,-1) -- (6+0.5,4.5);
\draw[black, thick, domain=6+0.5:11.8+0.5, samples=500] plot [smooth] (\x, {15*sqrt(0.3*(\x-6-0.5))-23*0.3*(\x-6-0.5)+7.5*0.09*(\x-6-0.5)*(\x-6-0.5)});
\node [left] at (11.5+0.5,1.95) {\small{$f_\xi$}};
\draw (9.47+0.5,0.1) -- (9.47+0.5,-0.1);
\node [below] at (9.47+0.5,0) {\small{$\nu_m$}};
\draw (5.9+0.5,2.54) -- (6.1+0.5,2.54);
\draw (6.41+0.5,-0.1) -- (6.41+0.5,0.1);
\draw (5.9+0.5,-0.54) -- (6.1+0.5,-0.54);
\node [left] at (6+0.5,-0.54) {\small{$c_m$}};
\node [left] at (6+0.5,2.54) {\small{$c_s$}};
\node [below] at (11.3+0.5,0) {\small{$\nu$}};
\node [below] at (6.41+0.5,0) {\small{$\nu_s$}};
\end{tikzpicture}
\caption{The function $f_\xi$ defined in \eqref{f} has for $\tilde{\xi}_d<\xi<\xi_d$ a local (but not global) minimum at $\nu_m>0$ (left figure) and for  $\xi>\xi_d$ a global minimum at $\nu_m>0$ (right figure). For $\xi<\tilde{\xi}_d$, the only local minimum is at zero (not shown).}\label{fig:f}
\end{figure}

Comparing the energy \eqref{E} of a function $u$ on a large torus to the energy of the constant state and renormalizing leads, after a rescaling of space, to the energy \eqref{ephi} (see \cite{GW}).
The $\Gamma$-convergence of \eqref{ephi}
to
\begin{align}
\ez(u):=
c_0{\rm Per}(\{u=1\})-4|\{u=1\}|+4\frac{|\{u=1\}|^2}{\xi^{3}}\notag
\end{align}
in the critical regime is established in \cite{GW}. This limit energy is of the type predicted by classical nucleation theory (going back to Gibbs \cite{G}): A positive term penalizes the surface energy of the region of positive phase, while a volume term reflects the reduction in bulk energy when the volume of the positive phase is large enough. (The higher order positive term prevents the positive phase from growing too large and is a ``remnant'' of the mean constraint.)

When the limit energy $\ez$ is minimized as a function of the volume $|\{u=1\}|$, one obtains the function $f_\xi:[0,\infty)\to\R$ defined by
\begin{align}
 f_\xi(\nu):=\bar{C}_1\nu^{(d-1)/d}-4\nu+4\xi^{-(d+1)}\nu^2,\quad \text{for}\quad \bar{C}_1:=c_0\sigma_d^{1/d}d^{(d-1)/d},\label{f}
\end{align}
where $c_0$ is the minimum energy cost for an interface connecting $\pm 1$ (equal to $2\sqrt{2}/3$ for the standard potential) and $\sigma_d$ is the surface area of the unit sphere in $\R^d$. The behavior of $f_\xi$ and its dependence on $\xi$ is illustrated in Figure \ref{fig:f}.
For $\xi>\xi_d$, the strictly positive local minimum of $f_\xi$ is the global minimum. This corresponds to the droplet-shaped global minimizer of $\ephi$ captured in \cite{BGLN,CCELM,GW}.
For $\tilde{\xi}_d<\xi<\xi_d$ (see \cite{GWW} for an explicit computation of the bifurcation point $\tilde{\xi}_d$), the function $f_\xi$ has a global minimum at zero and a strictly positive local minimum at $\nu_m$ with value $c_m$. This corresponds to the droplet-shaped local minimizer of $\ephi$ whose existence is proved in \cite{GWW}. The other volume-constrained minimizers of $\ephi$ subject to \eqref{vol} are also connected to the graph of $f_\xi$; see Subsection \ref{ss:quant}.

The function $f_\xi$ has a local \emph{maximum} at $\nu_s\in (0,\nu_m)$ with value $c_s$. It is natural to conjecture that this fact should indicate that there is a saddle point of $\ephi$ with volume approximately $\nu_s$. For both $\xi>\xi_d$ and  $\tilde{\xi}_d<\xi\leq\xi_d$, a mountain pass argument leads to the existence of a saddle point of $\ephi$ in between the constant state and the droplet-shaped minimizer \cite{GW,GWW}. One expects that this saddle point is the ``critical nucleus'' predicted by Cahn and Hilliard \cite{CH3} and that it is also approximately droplet-shaped, like the minimum. Unfortunately the mountain pass argument fails to provide much information about the saddle point.  While it does yield that the \emph{energy} of the saddle point $u_s$ is close to $c_s$, it yields no information about the measure of $\{u_s\approx 1\}$  or about the shape or properties of this set. This gap in the analysis motivates this paper.

\subsection{Quantitative bounds}\label{ss:quant}
In \cite{GWW}, the following bounds were developed for the local minimizer $\um$ and volume-constrained minimizers $\uo$.
The local minimum $\um$ has energy and ``volume'' close to $c_m$ and $\nu_m$ with the bounds
\begin{align}
 \left|\ephi(\um)-c_m\right|&\lesssim \phi^{1/3},\label{emin}\\
   |\nu(\um)-\nu_m|&\lesssim C(\xi)\phi^{1/6}.\label{num}
\end{align}
Here and throughout, $\lesssim$ is used to indicate an upper bound by a constant depending at most on the dimension and the potential $G$. In addition, $\um$ is estimated in $L^2$ against the function $\Psi_m$ that is $+1$ in a ball of volume $\nu_m$ and $-1$ otherwise according to
\begin{align}
  |\um-\Psi_m|_{\tphil}^2\lesssim
  \begin{cases}\label{visa}
   C(\xi) \phi^{1/6}&\text{for}\quad d=2,3\\
  \phi^{1/(2d)}&\text{for}\quad d\geq 4,
  \end{cases}
\end{align}
where
\begin{align*}
 |u-\Psi_m|_{\tphil}&:=\inf_{x_0\in\tphil}||u-\Psi_m(\cdot-x_0)||_{L^2(\tphil)},
 \end{align*}
and $\Psi_m$ is understood in the periodic sense. In addition via elementary arguments one obtains the following bound on the volume of the interfacial region:
\begin{align}
  \lvert\{-1+2\phi^{\frac{1}{3}}\leq u_{m,\phi}\leq 1-2\phi^{\frac{1}{3}}\}\rvert\lesssim \phi^{1/3}.\label{Imin}
\end{align}
The analogous results hold for $\uo$ with volume $\omega$ and energy $f_\xi(\omega)$. Our numerical investigation in Subsection \ref{ss:numer} is in part intended to explore whether these exponents are sharp.

Similarly, let $\tilde{u}_s$ denote any volume-constrained minimum with energy $c_s+O(\phi^{1/3})$. Then we have
\begin{align}
|\ephi(\tilde{u}_s)-c_s|&\lesssim \phi^{1/3},\label{enps}\\
  | \nu(\tilde{u}_s)-\nu_s|&\lesssim C(\xi) \phi^{1/6},\label{wmm2}
  \end{align}
  and
  \begin{align}
  |\tilde{u}_s-\Psi_s|_{\tphil}^2\lesssim\label{ven2}
  \begin{cases}
  C(\xi) \phi^{1/6}&\text{for}\quad d=2,3\\
  \phi^{1/(2d)}&\text{for}\quad d\geq 4,
  \end{cases}
\end{align}
where $\Psi_s$ is $+1$ in a ball of volume $\nu_s$ and $-1$ otherwise. The \emph{saddle point} $\us$ obeys \eqref{Imin} and\eqref{enps}. We present numerical results in Subsection \ref{ss:numer} that explore whether it also satisfies \eqref{wmm2} and \eqref{ven2}.

In \cite{GWW20} it is also shown that the constrained minima are  \emph{equal to their Steiner symmetrization} with respect to some point in the torus. This yields in particular connectedness of appropriate superlevel and sublevel sets. Open questions include whether the constrained minima are unique and whether their level sets are convex. The numerical results in Subsection \ref{ss:numer} support these conjectures. A partial result on convexity of superlevel sets is given in Proposition~\ref{convex5new}.

\section{Numerical results}
\label{sec:numerical-results}

In this section we  compute the minimum and saddle point
configurations numerically for a range of values of $\phi$ in order to compare
against the scaling predictions outlined above.
To find
the minimum, we can simply start from a localized approximate droplet shape and
relax via the Cahn-Hilliard dynamics until they become (approximately) stationary. In order to find the saddle points without
inserting any prior knowledge of the system, we will compute
the heteroclinic orbit connecting the spatially homogeneous solution
to the droplet minimum. This heteroclinic orbit must necessarily pass
through a saddle point in-between, which can then easily be
identified. In order to compute this heteroclinic orbit, we employ the
String Method~\cite{ERV}.

There are other algorithms to compute stable and unstable
fixed points of dynamical systems, including  gentlest ascent
dynamics~\cite{e-zhou:2011}, Newton methods, and edge tracking and
dimer-type algorithms~\cite{gould-ortner-packwood:2016,
  levitt-ortner:2017}. Here, we use the String Method despite its
comparable computational inefficiency because (1) it does not require
higher derivatives of the system, (2) it does not rely on approximate
knowledge of the saddle point configuration, and (3) is very simple to
implement given an existing code integrating the dynamics.

\subsection{The String Method}
\label{ss:string}

The String Method starts with a collection of states in phase space (a
``string'' of states). In the first step, one evolves each of the
states forward in time. In the second step, one reparameterizes the
string to keep the states well-separated in phase space. As an
example, consider the gradient flow
\begin{align*}
  \dot x(t)=-\nabla V(x).
\end{align*}
Suppose that $x_*$ and $x_{**}$ are two isolated minima of $V$. We
want to find the orbit connecting $x_*$ and $x_{**}$, i.e., the curve
$\gamma$ connecting $x_*$ and $x_{**}$ such that
\begin{align*}
  (\nabla V)^\perp (\gamma)=0,
\end{align*}
where $(\nabla V)^\perp$ is the component of $\nabla V$ normal to
$\gamma$:
\begin{align*}
  (\nabla V)^\perp (\gamma)=\nabla V(\gamma)-(\nabla V(\gamma),\hat \tau)\hat \tau.
\end{align*}
Here $\hat \tau$ represents the unit tangent vector to the curve
$\gamma$ and $(\cdot,\cdot)$ represents the Euclidean inner
product. Note that the orbit will pass through at least one additional
critical point, the saddle point that we seek.

The first step is to choose a method of parameterizing paths
$\{\phi(\alpha)\colon \alpha\in [0,1]\}$. Parameterization by
arclength is a convenient choice (although other methods offer
advantages; see \cite{ERV2}). Given any initial path $\gamma_0$
connecting $x_*$ and $x_{**}$, we will generate the family of
time-dependent paths $\phi(\alpha,t)$ in the following way. In the
original String Method, the evolution is prescribed as
\begin{align*}
  \dot\phi=-\nabla V(\phi)^\perp +\lambda\hat \tau,\qquad \phi(\alpha,0)=\gamma_0,
\end{align*}
where $\dot\phi$ denotes the time derivative of $\phi$. The first term
on the right-hand side moves the string in the direction normal to the
string and the second term on the right-hand side is a Lagrange
multiplier term that acts in the tangential direction and enforces the
chosen parameterization. Numerical implementation consists of
discretizing the path into the images $\{\phi_i(t), i=0,1,\ldots, M\}$
and following a two-step method: first evolve according to
$\dot\phi_i=-\nabla V(\phi_i)^\perp$, and second reparameterize. Note
that the reparameterization is essential, since otherwise the points
will cluster around the minima.

The simplified String Method is based upon the observation that we can
absorb part of $(\nabla V(\phi))^\perp$ in the tangential component
and that doing so simplifies the algorithm and improves both stability
and accuracy~\cite{ERV2}. In this case, the evolution of the string is
best conceptualized as
\begin{align*}
  \dot \phi=-\nabla V(\phi)+\bar\lambda \hat\tau,
\end{align*}
where $\bar\lambda=(\nabla V(\gamma),\hat \tau)+\lambda$. The first
step is then to evolve according to the simplified dynamics
$\dot\phi_i=-\nabla V(\phi_i)$ and the second step is, as before, to
reparameterize. See~\cite{ERV2} for a full discussion.

We say that the string has converged when there is a point along the
string and away from the endpoints where the energy gradient vanishes
(to within some tolerance). In our application, we find only one
critical point between the local minima. We then read off the local
minimizers at the endpoints of the string and the saddle point as the
interior zero of the energy gradient.

\begin{remark}[Generalizations]
  We note that one does not need to know $x_*$ a priori. If one
  endpoint of the initial string lies in the basin of attraction of
  $x_*$ and is left free, the end of the string will evolve
  automatically toward the stable stationary point.

  We also note that the applicability of the String Method is not
  limited to gradient flows (although it has been mainly used in that
  context). One can consider
  \begin{align*}
    \dot x(t)=b(x),\qquad x(0)=x_0
  \end{align*}
  for general vector fields $b$ and use the String Method to find
  the orbit connecting $x_0$ and a zero $x_*$ of $b$, i.e., a
  path $\gamma$ connecting $x_0$ and $x_*$ such that
  \begin{align*}
    (b)^\perp (\gamma)=0.
  \end{align*}
\end{remark}

\subsection{Implementation and convergence}
\label{sec:impl-conv}

We implemented the String Method with a simple pseudo-spectral
algorithm and the FFT in MATLAB.  One string typically had a size of
$512$ images, although for $\phi\in\{0.025, 0.0125\}$ we had to take
reduce to $256$ to cut down the size of the string. Computations were
done in double precision.  The spatial mesh size was chosen to be
\begin{align*}
  N_{x}:=2\left\lceil \frac{\xi^{\frac{3}{2}}}{\phi^{\frac{3}{2}}}\right\rceil
\end{align*}
and the size of the time step was set to $N_{t}:=N_{x}^{2}\phi$.  When
computing the saddle point for $\phi=0.05$ and $\phi=0.0125$, we were
not able to reduce the tolerance to the value set as our stopping
criterion, so we let it run for as long as we could.  The tolerance
was reduced relatively quickly (approximately 439556.72 s) to
approximately the level that we achieved in the end. After that
results were slow to improve.

\begin{table}[h]
	\caption{Size of various parameters for $\xi=2.3$ and different values of $\phi$.}
    \begin{adjustbox}{width=\linewidth,center}	
      \begin{tabular}{lrrrrrrrrr}
		\toprule
		$\phi$ 						& 0.4000	& 0.2000	& 0.1000	& 0.0500	& 0.0250		& 0.0125		& 0.00625		& 0.003125	\\\midrule
		gridsize image 			& $28^{2}$	& $78^{2}$	& $222^{2}$	& $624^{2}$	& $1766^{2}$	& $4992^{2}$	& $14120^{2}$	& $39936^{2}$	\\
		image 					& 6\,KB		& 49\,KB	& 0.4\,MB	& 3.1\,MB	& 25\,MB		& 199\,MB		& 1.6\,GB		& 12.8\,GB	\\
		string	512 images		& 3\,MB		& 25\,MB	& 202\,MB	& 1.6\,GB	& 12.8\,GB		& 102\,GB		& 817\,GB		& 6533\,GB	\\
		CPU time for saddle	in s	& 15		& 138		& 970		& 7446510	& 1277578		& 				& 				& 	\\
		max RAM for saddle	in MB	& 664		& 746		& 1405		& 12731		& 45686			& 				& 				& 	\\
		CPU time for min in s		& 			& 			& 			& 			& 				& 9739			& 125993		& 	\\
		max RAM for min in MB		& 			& 			& 			& 			& 				& 5359			& 38419			& 	\\\bottomrule
	  \end{tabular}
    \end{adjustbox}
\end{table}

\subsection{Numerical results}
\label{ss:numer}

\renewcommand{\arraystretch}{1.5}
\begin{table}[ht]
  \caption{Numerical results for the local minimum for $\xi=2.3$ and
    different values of $\phi$.}\label{t:min}
  \begin{subtable}{\textwidth}
    \centering
    \caption{Raw data.}
    \begin{adjustbox}{width=\columnwidth,center}	
	  \begin{tabular}{lcccccccc}
		\toprule
		$\phi$ 																								& 0.4000 	& 0.2000	& 0.1000	& 0.0500	& 0.0250	& 0.0125	& 0.00625	 \\ \midrule
		$\lvert\{-1+2\phi^{\frac{1}{3}}\leq u_{m,\phi}\leq 1-2\phi^{\frac{1}{3}}\}\rvert$					& 0.0000 	& 0.0000	& 0.1675	& 0.2975	& 0.2459	& 0.1647	& 0.1029	 \\
		$\lvert\mathcal{E}_{\phi}^{\xi}(u_{m,\phi})-c_{m}\rvert$ 											& 3.8415 	& 2.1848	& 1.1512	& 0.5900	& 0.2986	& 0.1502	& 0.0753	 \\
		$\lvert\nu(u_{m,\phi})-\nu_{m}\rvert$ 																& 1.9640 	& 0.1711	& 0.1514	& 0.1773	& 0.1256	& 0.0834	& 0.0518	 \\
		$\lvert u_{m,\phi}-\Psi_{m}\rvert_{\mathbb{T}_{\phi L}}^{2}$ 										& 4.2930 	& 1.8807	& 0.9145	& 0.4526	& 0.2261	& 0.1132	& 0.0564	 \\
		$\mathrm{error}_{m,\phi}$ 																			& 0.0223 	& 0.0200	& 0.0053	& 0.0037	& 0.0031	& 0.0028	& 0.0027	 \\\bottomrule
	  \end{tabular}
    \end{adjustbox}
  \end{subtable}
  \newline
  \vspace*{0.5 cm}
  \newline
  \begin{subtable}{\textwidth}	
    \centering
    \caption{Log of ratios}
    \begin{adjustbox}{width=\columnwidth,center}	
	  \begin{tabular}{lccccccc}
		\toprule
		$\phi$ 			& 0.2000	& 0.1000	& 0.0500	& 0.0250	& 0.0125	& 0.00625	\\ \midrule
		$\log_{2}\left(\frac{\lvert\{-1+2(2\phi)^{\frac{1}{3}}\leq u_{m,2\phi}\leq 1-2(2\phi)^{\frac{1}{3}}\}\rvert}{\lvert\{-1+2\phi^{\frac{1}{3}}\leq u_{m,\phi}\leq 1-2\phi^{\frac{1}{3}}\}\rvert}\right)$
					& 			& 			& -0.8287	& 0.2748	& 0.5785	& 0.6782	\\
		 $\log_{2}\left(\frac{\lvert\mathcal{E}_{2\phi}^{\xi}(u_{m,\phi})-c_{m}\rvert}{\lvert\mathcal{E}_{\phi}^{\xi}(u_{m,\phi})-c_{m}\rvert}\right)$
					& 0.8142	& 0.9244	& 0.9644	& 0.9825	& 0.9915	& 0.9957	\\
		$\log_{2}\left(\frac{\lvert\nu(u_{m,2\phi})-\nu_{m}\rvert}{\lvert\nu(u_{m,\phi})-\nu_{m}\rvert}\right)$  	
					& 3.5209	& 0.1765	& -0.2278	& 0.4974	& 0.5899	& 0.6891	\\
		$\log_{2}\left(\frac{\lvert u_{m,2\phi}-\Psi_{m}\rvert_{\mathbb{T}_{2\phi L}}^{2}}{\lvert u_{m,\phi}-\Psi_{m}\rvert_{\mathbb{T}_{\phi L}}^{2}}\right)$ 		
					& 1.1907	& 1.0402	& 1.0147	& 1.0013	& 0.9985	& 1.0039	\\
		\bottomrule
	  \end{tabular}
    \end{adjustbox}
  \end{subtable}
\end{table}
Although the local minimizer can be approximated by steepest descent
without the String Method, we begin by summarizing the data recovered
for the minimizer; see Table \ref{t:min}.  Here the stopping criterion
was that the spectral norm of the difference of two consecutive
approximations of the minimizer is smaller than $5\cdot 10^{-3}$ and
the energy gradient at the minimum was at most $10^{-3}$.

Compared to \eqref{emin}, the numerical results seem to suggest the
better convergence rate $\phi^1$ for the energy and roughly
$\phi^{2/3}$ for the interfacial region and the droplet ``volume.''
The $L^2$ difference to the sharp interface profile also seems to
converge at rate $\phi^1$. These results are consistent with the
analysis but suggest that the analytical bounds to date are not
sharp. The level lines and a cross-section of the local minimizer are
shown in Figure \ref{fig:min}. See also Figure \ref{fig:minsad} for an
illustration when $\xi<\xi_d$.  \FloatBarrier

We now turn to the results of the String Method for the saddle
point. The numerical data is summarized in Table \ref{t:sad}. As for
the minimum, the results suggest better convergence in energy and
$L^2$ than reflected in \eqref{enps} and \eqref{ven2}. A good estimate
of the convergence rate for the volume and the interfacial volume
would require additional data, but the overall data support the
\emph{conjecture that the saddle point is indeed a volume-constrained
  minimum with ``volume'' approximately $\nu_s$}.

In Figure \ref{fig:closeup}, we plot a closeup of some of the contours
of the saddle point, which is a droplet-shaped function with the
expected properties. In addition to being connected and Steiner
symmetric about the axes, the superlevel sets for values near $1$
appear to be nearly spherical and convex.  We observe a unique minimum
point in the corner and (bearing in mind the periodicity) convexity of
the sublevel set $\{u\leq s\}$ for $s$ near this minimum
value. Motivated by these numerical observations, we examine convexity
in Section \ref{S:convex}.

\renewcommand{\arraystretch}{1.5}
\begin{table}[ht]
  \caption{Numerical results for the saddle point for $\xi=2.3$ and
    different values of $\phi$.}\label{t:sad}
  \begin{subtable}{\textwidth}
    \centering
    \caption{Raw data.}
	\begin{tabular}{lcccccc}
		\toprule
		$\phi$ 																								& 0.4000 	& 0.2000	& 0.1000	& 0.0500	& 0.0250		\\ \midrule
		$\lvert\{-1+2\phi^{\frac{1}{3}}\leq u_{s,\phi}\leq 1-2\phi^{\frac{1}{3}}\}\rvert$ 					& 0.0000 	& 0.0000	& 0.0387	& 0.0725	& 0.0612		\\
		$\lvert\mathcal{E}_{\phi}^{\xi}(u_{s,\phi})-c_{s}\rvert$ 											& 0.6101 	& 0.2041	& 0.0047	& 0.0182	& 0.0082		\\
		$\lvert\nu(u_{s,\phi})-\nu_{s}\rvert$ 																& 4.5279 	& 0.4664	& 0.0724	& 0.0475	& 0.0747		\\
		$\lvert u_{s,\phi}-\Psi_{s}\rvert_{\mathbb{T}_{\phi L}}^{2}$ 										& 5.6429 	& 2.9339	& 1.3946	& 0.7607	& 0.5849		\\
		$\mathrm{error}_{s,\phi}=\norm{\nabla E(u_{s,\phi})}^{2}$ 											& 0.0039 	& 0.0038	& 0.0008	& 0.0666	& 0.2922		\\
		\bottomrule
	\end{tabular}
  \end{subtable}
  \newline
  \vspace*{0.5 cm}
  \newline
  \begin{subtable}{\textwidth}	
    \centering
    \caption{Log of ratios}
	\begin{tabular}{lccccc}
		\toprule
		$\phi$																							& 0.2000	& 0.1000	 & 0.0500	& 0.0250		\\ \midrule
		$\lvert\{-1+2\phi^{\frac{1}{3}}\leq u_{s,\phi}\leq 1-2\phi^{\frac{1}{3}}\}\rvert$				& 			& 			 & -0.9055	& 0.2450		\\
		$\lvert\mathcal{E}_{\phi}^{\xi}(u_{s,\phi})-c_{s}\rvert$ 	 									& 1.5798	& 5.4405	 & -1.9547	& 1.1557		\\
		$\lvert\nu(u_{s,\phi})-\nu_{s}\rvert$ 	 														& 3.2792	& 2.6875	 & 0.6086	& -0.6531		\\
		$\lvert u_{s,\phi}-\Psi_{s}\rvert_{\mathbb{T}_{\phi L}}^{2}$ 									& 0.9436	& 1.0730	 & 0.8744	& 0.3792		\\
		\bottomrule
	\end{tabular}
  \end{subtable}
\end{table}
\FloatBarrier

\begin{figure}[t!]
    \centering
    \includegraphics[width=0.9\textwidth]{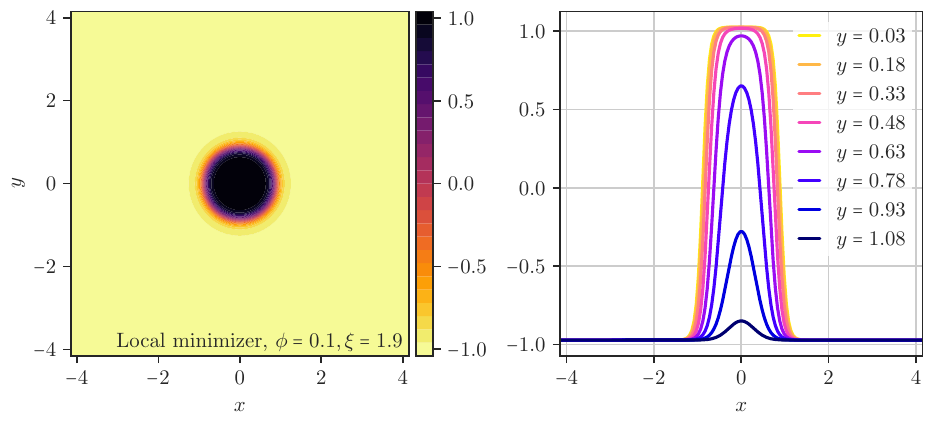}
    \caption{The local minimizer has the expected properties. Left:
      Level curves of the local minimum for $\phi = 0.1, \xi =
      1.9$. Right: Crossections at various $y$ as a function of
      $x$.}\label{fig:min}
\end{figure}

\begin{figure}[ht]
  \centering
  \includegraphics[width=0.9\textwidth]{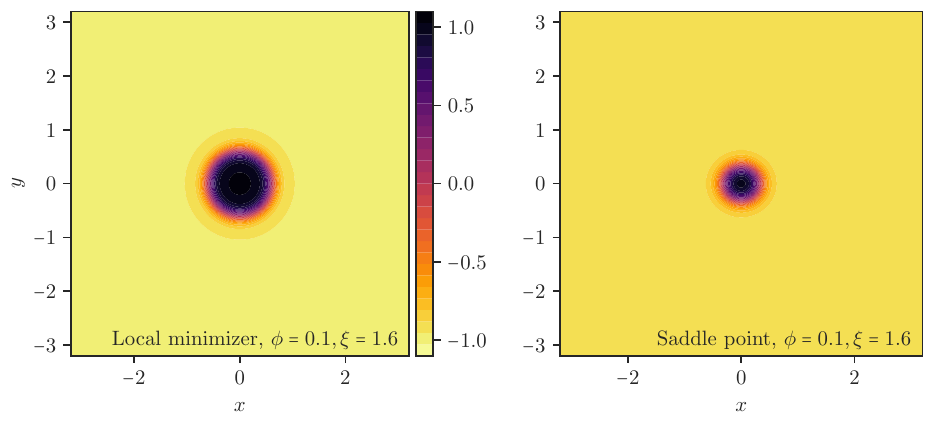}
  \caption{Local minimizer (left) and saddle point (right) for
    $\phi=0.1$ and $\xi=1.6$, i.e., in the regime that the constant
    state is not the global minimizer. The string to compute the
    saddle point is converged up to error
    $10^{-5}$.}\label{fig:minsad}
\end{figure}
\begin{figure}[ht]
  \centering
  \includegraphics[width=0.9\linewidth]{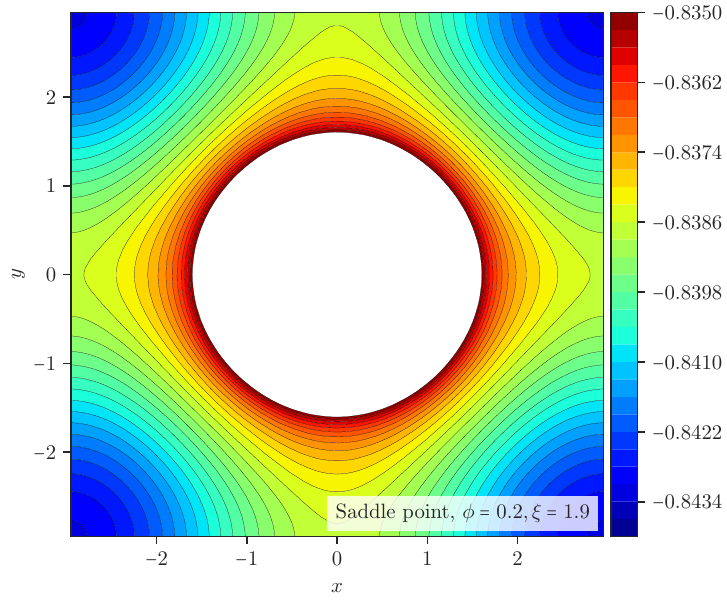}
  \caption{A close-up of the level lines of the saddle point for
    $\phi=0.2$ and $\xi=1.9$. The string is converged up to error
    $10^{-4}$. This figure motivates the analysis in Section
    \ref{S:convex}.}\label{fig:closeup}
\end{figure}

\FloatBarrier

\section{Convexity}\label{S:convex}
In the following sections we consider the volume-constrained minimizers; that is, we consider the minimizers of the energy \eqref{ephi}
over the set
\begin{align}\label{X-phi}
X_{\omega,\phi}:=\left\{u\in H^1\cap L^4(\tphil): \dashint_{\tphil} u \, dx=-1+\phi,\quad\nu(u)=\omega\right\},
\end{align}
where the ``volume'' $\nu(\cdot)$ is given by \eqref{nu}.
We are interested in geometric properties of the minimizers.
For simplicity of notation, we will write $u$ instead of $\uo$.

We would like to understand convexity of the superlevel sets of the constrained minimizers. Based on the numerics (see for instance Figure~\ref{fig:closeup}), it seems likely that the superlevel sets are convex not only near the maximum but for all values up to some critical value, at which point the convexity changes and the sublevel sets around the minimum become convex. Such a result is currently out of reach, but we will adapt a method of Caffarelli and Spruck to demonstrate convexity of superlevel sets under Hypothesis {\bf{(H2)}}, below. Before turning to this result, we summarize some basic facts from the previous analysis.

We begin by recalling two results from \cite{GWW} and \cite{GWW20}.
First, according to the mean condition, the definition of $\nu(\cdot)$,
and Lemma 2.9 from \cite{GWW}, one has the following uniform bounds.
\begin{lemma}\label{bound}
For any $\omega_1>0$ there exists $\phi_0>0$ such that for all $\phi\in(0,\phi_0)$, any volume-constrained minimizer $u$ for
$\omega\in[\omega_1,\xi^{3}/2]$ satisfies
\begin{align}
-1-\phi^{1/3}\leq\inf_{\tphil}u\leq -1+\phi\leq 1-2\phi^{1/3}\leq \sup_{\tphil}u\leq 1+\phi^{1/3}.\label{bd}
\end{align}
\end{lemma}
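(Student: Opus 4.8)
The plan is to check the chain \eqref{bd} one inequality at a time, since all but the outermost pair are elementary bookkeeping from the mean and volume constraints, so that the only real work is quoting the cited uniform $L^\infty$ bound.

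First, the two innermost bounds are immediate: because $u\in X_{\phi,\omega}$ has average $\dashint_{\tphil}u\,dx=-1+\phi$, the (essential) infimum of $u$ cannot exceed this average, giving $\inf_{\tphil}u\le -1+\phi$, while the middle inequality $-1+\phi\le 1-2\phi^{1/3}$ is just $\phi+2\phi^{1/3}\le 2$, which holds as soon as $\phi_0$ is taken small.

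Next I would obtain $1-2\phi^{1/3}\le\sup_{\tphil}u$ from the volume constraint. Since $\chi(s)=0$ for $s\le 1-2\phi^{1/3}$, if we had $\sup_{\tphil}u<1-2\phi^{1/3}$ then $\chi(u)=0$ everywhere and hence $\nu(u)=\int_{\tphil}\chi(u)\,dx=0$, contradicting $\nu(u)=\omega\ge\omega_1>0$. (The minimizer is smooth by elliptic regularity for its Euler--Lagrange equation, so $\sup=\esup$; in any case the argument runs verbatim with $\esup$.)

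The only genuine input is the outer pair $-1-\phi^{1/3}\le\inf_{\tphil}u$ and $\sup_{\tphil}u\le 1+\phi^{1/3}$, i.e.\ that a volume-constrained minimizer stays within $\phi^{1/3}$ of the wells $\pm1$; this is precisely Lemma~2.9 of \cite{GWW}, which we invoke, and it is here that the dependence of $\phi_0$ on $\omega_1$ enters (through the energy scale $f_\xi(\omega)$ for $\omega\in[\omega_1,\xi^3/2]$). For orientation: writing the Euler--Lagrange equation as $-\phi\Delta u+\tfrac{1}{\phi}G'(u)=\lambda+\mu\,\chi'(u)$ with Lagrange multipliers $\lambda,\mu$ for the mean and volume constraints, one observes that $\chi'(u)$ is supported in the thin transition layer $\{1-2\phi^{1/3}<u<1-\phi^{1/3}\}$ and so does not affect the region where $u$ is far from $\pm1$, while $\lambda$ is controlled by the a priori energy bound; a truncation-and-comparison argument then shows that violating the stated bounds would strictly lower the energy among admissible competitors with the same mean and volume. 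The one real obstacle --- and the reason we cite rather than reprove this --- is exactly the control of the Lagrange multipliers, carried out in \cite{GWW}. Assembling the four items yields \eqref{bd}.
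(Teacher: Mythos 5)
Your proposal is correct and follows essentially the same route as the paper, which does not supply a detailed proof but instead remarks that the bounds follow from the mean condition, the definition of $\nu(\cdot)$, and Lemma~2.9 of \cite{GWW}; you have simply unpacked exactly which ingredient yields which inequality in the chain, and correctly identified that the only nontrivial input (the outer bounds keeping $u$ within $\phi^{1/3}$ of $\pm1$) is the cited Lemma~2.9.
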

The upper bound $\phi_0$ from Lemma \ref{bound} is the only restriction we will place on $\phi>0$ for the rest of the article. As a consequence of the uniform bounds, any constrained minimizer is smooth and satisfies the Euler Lagrange equation
\begin{align}
-\phi\Delta u +\frac{1}{\phi}G'(u)+\lambda_{\phi}+\lambda_{\omega}\chi'(u)=0\qquad\hbox{in}\:\tphil,\label{eleq}
\end{align}
where $\lambda_{\phi}$, $\lambda_{\omega}\in\R$ are Lagrange parameters corresponding to the mean and volume constraints, respectively.
Next we recall the symmetry of the constrained minimizers.
We assume without loss of generality that the location of the unique maximum of $u$ is $x=0$.
By symmetry it is often sufficient to restrict to the first quadrant, i.e., to
$$\tphil^{+}:=\{x\in \tphil: x_{i} >0,\, i=1,\hdots,d\}.$$

\begin{theorem}[Theorem 1.1, \cite{GWW20}]\label{steinersym}
 Any volume-constrained minimizer $u$ is equal (up to a translation) to its Steiner symmetrization about the origin. In particular, its superlevel sets are simply connected, and
$u$ is strictly decreasing along all rays starting at the (unique) maximum of $u$. In fact for any ray $e$ starting in the maximum, the directional derivative is strictly negative:
\begin{align}
\partial_{e}u< 0\qquad\text{on }\tphil^+.\label{eneg}
\end{align}
\end{theorem}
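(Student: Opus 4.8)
The plan is to produce a Steiner-symmetric minimizer by rearrangement and then, via the equality case of the relevant P\'olya--Szeg\H{o} inequality, to show that the given minimizer already \emph{is} one; the strict monotonicity is then obtained from the strong maximum principle applied to the differentiated Euler--Lagrange equation. For the first part, fix a coordinate direction $x_i$ and let $u^{\star_i}$ be obtained from $u$ by replacing each slice $x_i\mapsto u(\dots,x_i,\dots)$ --- a function on the circle $\R/(\phi L)\Z$ --- by its symmetric decreasing rearrangement about $x_i=0$. This preserves the one-dimensional measure of each slice of every superlevel set, hence (Fubini) the distribution function $t\mapsto|\{u>t\}|$; consequently the mean value, the potential term $\tfrac{1}{\phi}\int_{\tphil}\big(G(u)-G(-1+\phi)\big)\,dx$ and the volume $\nu(u)$ are all unchanged, while the P\'olya--Szeg\H{o} inequality for Steiner symmetrization gives $\int_{\tphil}|\nabla u^{\star_i}|^2\le\int_{\tphil}|\nabla u|^2$. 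Iterating over $i=1,\dots,d$ (each step preserving the symmetry gained by the previous ones) produces $u^\star\in X_{\phi,\omega}$ with $\ephi(u^\star)\le\ephi(u)$; since $u$ minimizes, so does $u^\star$, and by Lemma~\ref{bound} both are smooth solutions of \eqref{eleq}.

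The crux is the rigidity step. Since the lower-order terms of $\ephi$ are invariant under each symmetrization and $\ephi(u^\star)=\ephi(u)$, equality must hold in the P\'olya--Szeg\H{o} inequality at each of the $d$ steps. One then applies the equality case of that inequality (of Brothers--Ziemer / Cianchi--Fusco type): at the $x_1$-step, the tangential part forces $x_1\mapsto u(x_1,x')$ to be, for a.e.\ transversal slice $x'=(x_2,\dots,x_d)$, symmetric and unimodal about some center $c_1(x')$, while the transversal part forbids shearing of the level sets and forces $c_1(\cdot)$ to be constant; after a translation in the $x_1$-direction, $u$ is therefore even and unimodal in $x_1$. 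Repeating for $x_2,\dots,x_d$ shows that, up to a single translation, $u$ equals its Steiner symmetrization about the origin. To run this rigorously one must verify the non-degeneracy hypotheses of the equality case; real analyticity of $u$ is not available (the cutoff $\chi$ is merely smooth), so instead I would combine the elementary fact that $|\{u=t\}|=0$ for all but countably many $t$ with the unique continuation property for \eqref{eleq}, which excludes plateaus of $u$. (Alternatively, a continuous Steiner symmetrization, along which $\ephi$ is non-increasing and hence constant on a minimizer, forces $u$ to be a fixed point of the flow and can make the rigidity cleaner.)

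It remains to upgrade weak to strict monotonicity. Translate so that the maximum sits at the origin; Steiner symmetry gives $\partial_{x_i}u\le0$ wherever $0<x_i<\phi L/2$, hence $\partial_e u=\sum_i e_i\,\partial_{x_i}u\le0$ along every ray $e$ into the octant $\{x\in\tphil: x_i>0,\ i=1,\dots,d\}$. Differentiating \eqref{eleq} in $x_i$, the function $w:=\partial_{x_i}u$ solves the linear elliptic equation $-\phi\Delta w+\big(\tfrac{1}{\phi}G''(u)+\lambda_\omega\chi''(u)\big)w=0$ with bounded coefficient (Lemma~\ref{bound}); since $w\le0$ on the connected open slab $\{0<x_i<\phi L/2\}$, the strong maximum principle gives either $w<0$ there or $w\equiv0$. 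The latter would make $u$ independent of $x_i$, hence ``cylindrical'', which for $\phi$ small contradicts the $L^2$-closeness of $u$ to a ball of volume $\omega$ (Section~\ref{ss:quant}). Hence $\partial_{x_i}u<0$ wherever $0<x_i<\phi L/2$, so $\partial_e u<0$ on the octant; together with the evenness of $u$ and the quantitative droplet structure, this locates the zeros of $\nabla u$ at the (unique) maximum and minimum and shows that the droplet superlevel sets, being star-shaped about the maximum, are simply connected.

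The main obstacle is the rigidity step: one must certify the non-degeneracy hypotheses of the P\'olya--Szeg\H{o} equality case for a constrained minimizer with $\chi$ only smooth, and then push the equality analysis to the conclusion that the slice-wise centers $c_i(\cdot)$ are \emph{constant} --- not merely that each slice is symmetric about \emph{some} point --- which is where continuity of $u$ and connectedness of its superlevel sets must be used, and where the toroidal setting (rearrangement on a circle, periodicity of the centers) requires extra care. A secondary difficulty is ruling out the degenerate alternative $w\equiv0$ and the finer statements about critical points and simple connectedness, which rely not on \eqref{eleq} alone but on the quantitative bounds of Section~\ref{ss:quant}.
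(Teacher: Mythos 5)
The theorem is not proved in this paper: the remark immediately following it states that the symmetry statement \emph{is} Theorem~1.1 of \cite{GWW20}, and that the strict monotonicity \eqref{eneg} and the nonvanishing of the gradient ``follow from the Strong Maximum Principle, as shown in the proof of Theorem~1.1 in \cite{GWW20}.'' There is therefore no in-paper argument to compare against, and your proposal must be judged as a blind reconstruction of the cited external result.

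As such, the skeleton is reasonable and the second half (differentiating \eqref{eleq} and applying the strong maximum principle and Hopf's lemma) matches what the remark attributes to \cite{GWW20}. But the route you take through the P\'olya--Szeg\H{o} equality case is the hard way, and the gap you flag is a genuine one: the Brothers--Ziemer / Cianchi--Fusco rigidity theorems live on $\R^n$, whereas here each slice is a circle $\R/(\phi L)\Z$; an equality case for circular rearrangement on the torus must be re-derived, its non-degeneracy hypotheses verified (the ``$|\{u=t\}|=0$ for a.e.\ $t$'' observation is automatic and not the needed input; one needs absolute continuity of the distribution function and absence of plateaus at positive-measure level), and the conclusion ``the centers $c_i(\cdot)$ are a single constant'' --- not just constant on components --- requires its own argument. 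The bibliography entry \cite{Br} points to the cleaner route that \cite{GWW20} almost certainly follows: two-point rearrangement (polarization) across hyperplanes, which on a torus circumvents the measure-theoretic equality-case machinery, gives reflection symmetry directly from minimality, and delivers monotonicity plus (via Hopf) strict monotonicity in one stroke; your parenthetical mention of continuous Steiner symmetrization is the same family of ideas and closer to the mark than the Brothers--Ziemer route. Two further loose ends in your tail: (i) ruling out $w\equiv 0$ by appeal to the quantitative droplet bounds is defensible but heavier than necessary (a cylindrical minimizer fails the energy or constraint balance at order one, without needing \eqref{visa}); and (ii) the final claim that $\nabla u$ vanishes only at the maximum and minimum needs more care than ``star-shapedness'': periodicity plus coordinate-wise evenness forces $\partial_{x_i}u=0$ on \emph{both} hyperplanes $\{x_i=0\}$ and $\{x_i=\phi L/2\}$, so symmetry alone produces $2^d$ reflection-fixed points at which $\nabla u=0$, and an argument is needed to restrict the assertion to the two stated points (or to state it precisely as in \cite{GWW20}).
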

\begin{remark} The first part of the theorem is precisely Theorem 1.1 from \cite{GWW20}, and~\eqref{eneg} and the nonvanishing of the gradient follow from the Strong Maximum Principle, as shown in the proof of Theorem 1.1 in \cite{GWW20}.
\end{remark}
\subsection{Convexity method of Caffarelli and Spruck}\label{ss:convex}
We will now use the Euler Lagrange equation \eqref{eleq} and a method that Caffarelli and Spruck introduced in~\cite[Theorem 2.1]{CS}
to establish convexity of the superlevel sets of volume-constrained minimizers $u=\uo$ under an assumption on the nonlinearity, specified in Hypothesis {\bf{(H2)}}, below.

\begin{remark}\label{nondeg}
By symmetry of $u$ we have for $i\ne j$ that
\begin{align*}
\partial_{i}u(0,\hdots,0,x_j,0,\hdots,0)=0.
\end{align*}
Hence the Hessian matrix $D^2u(0)$ is diagonal, and we have
\begin{align*}
\left(D^2u(0)\right)_{ij}=\lambda_i\delta_{ij}
\end{align*}
where $(\delta_{ij})_{ij}$ denotes the identity matrix in $\R^d$ and $\lambda_i\in\R$ for $i\in\{1,\hdots,d\}$. Because $x=0$ is a maximum point of $u$, we have $\lambda_i\leq 0$ for $i\in\{1,\hdots,d\}$. More is true:
\end{remark}
\begin{lemma}\label{convex}
The origin is a non-degenerate critical point of any volume-constrained minimizer $u$,  and $\partial^2_{e}u(0)<0$ for any ray $e$ starting in the origin.
\end{lemma}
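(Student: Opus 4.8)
The plan is to strengthen Remark~\ref{nondeg}. There we know that $D^2u(0)$ is diagonal with entries $\lambda_1,\dots,\lambda_d$ and $\lambda_i\le 0$; it therefore suffices to show $\lambda_k<0$ for each $k$. Then $D^2u(0)$ is negative definite, so $0$ is a non-degenerate critical point, and $\partial^2_e u(0)=\sum_i\lambda_i e_i^2<0$ for every unit vector $e$, i.e.\ along every ray from the origin.

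First I would differentiate the Euler--Lagrange equation~\eqref{eleq} in the direction $x_k$. Since $u$ is smooth by Lemma~\ref{bound} and $\lambda_\phi,\lambda_\omega$ are constants, the function $v:=\partial_k u$ solves the linear uniformly elliptic equation
\[
-\phi\,\Delta v + c(x)\,v = 0 \quad\text{in } \tphil, \qquad c:=\tfrac1\phi G''(u)+\lambda_\omega\,\chi''(u),
\]
with $c$ smooth and bounded on the compact torus, and $v(0)=0$ because the origin is a critical point of $u$. By Theorem~\ref{steinersym}, $v=\partial_k u<0$ on $\tphil^{+}$. Since $u$ equals its Steiner symmetrization about $0$ it is even in each $x_j$, hence so is $v$; reflecting $\tphil^{+}$ across the hyperplanes $\{x_j=0\}$ with $j\ne k$ shows that $v<0$ on an open dense subset of the slab $S:=\{0<x_k<\phi L/2\}$, so $v\le 0$ on $\overline S$ by continuity.

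Next, rewriting the equation as $\phi\,\Delta v - c^{+}v = -c^{-}v\ge 0$ (using $v\le 0$ and $c^{-}\ge 0$) and applying the strong maximum principle for operators with non-positive zero-order coefficient, I would conclude that $v<0$ throughout the connected open slab $S$: otherwise $v$ would attain its maximum value $0$ at an interior point and vanish identically, contradicting $v<0$ on $\tphil^{+}\subset S$. Now the origin lies on the flat face $\{x_k=0\}$ of $\partial S$, so the interior ball condition holds there; since $v<0$ in $S$, $v(0)=0$, and the boundary value vanishes (so the sign of $c$ is immaterial, as one sees again by rewriting with the coefficient $-c^{+}\le 0$), Hopf's boundary point lemma yields $\partial_\nu v(0)>0$ for the outward normal $\nu=-e_k$. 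This says $-\partial_k\partial_k u(0)>0$, i.e.\ $\lambda_k<0$; as $k\in\{1,\dots,d\}$ was arbitrary, the lemma follows.

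The point to be careful about is that the unique maximum of $u$ sits at a \emph{corner} of $\tphil^{+}$, where no interior ball touches the boundary, so Hopf cannot be applied directly on $\tphil^{+}$; the remedy is precisely the reflection step, which replaces $\tphil^{+}$ by the slab $S$ with flat boundary, combined with the strong maximum principle to upgrade $v\le 0$ to $v<0$ on all of $S$. One must also make sure to invoke the versions of the strong maximum principle and of Hopf's lemma valid for a zero-order coefficient of arbitrary sign — legitimate here because $v\le 0$ on $\overline S$ and because $v$ vanishes at the boundary point under consideration.
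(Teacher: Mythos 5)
Your proof is correct and follows essentially the same route as the paper: differentiate the Euler--Lagrange equation, establish strict negativity of the directional derivative in a half-region via the strong maximum principle, and apply Hopf's boundary lemma at the flat face through the origin. The only difference is cosmetic—you specialize to coordinate directions and conclude $\lambda_k<0$ for each $k$, whereas the paper works directly with a general ray $e$ on the half-space $\tphil^i$—and you spell out a few details the paper leaves implicit (the reflection/evenness step extending $\partial_k u<0$ from $\tphil^+$ to the slab, and the maximum-principle/Hopf variants valid for a zero-order coefficient of arbitrary sign when the boundary maximum equals $0$).
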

\begin{proof}
Let $e$ be any ray starting in the origin. We choose $i\in\{1,\hdots,d\}$ such that
$e$ points into the halfspace $\tphil^{i}:=\{x\in \tphil: x_{i} >0\}$.
According to Theorem \ref{steinersym}, we have
$\partial_{e}u< 0$ in $\tphil^{i}$. Note that by \eqref{eleq} the
function $\partial_{e}u$ satisfies the equation
\begin{align*}
-\phi\Delta(\partial_{e}u)+\left(\frac{1}{\phi}G''(u)+\lambda_{\omega}\:
\chi''(u)\right)\partial_{e}u=0\qquad \text{in $\tphil^{i}$. }
\end{align*}
Also for $\phi$ sufficiently small, there holds
\begin{align}
 \sup_{\{u>1-\phi^{\frac{1}{3}}\}} \frac{1}{\phi}G''(u)>0.\label{cpos}
\end{align}
For fixed $0<\phi\ll 1$, we choose a radius $\rho>0$ small enough so that the ball $B_\rho(0)$ satisfies
\[B_{\rho}(0)\Subset \{u>1-\phi^{\frac{1}{3}}\}.\]
Then the definition of $\chi$ implies
\begin{align*}
\frac{1}{\phi}G''(u)+\lambda_{\omega}\:\chi''(u)=\frac{1}{\phi}G''(u)\,\overset{\eqref{cpos}}>\,0\qquad\text{for all }x\in B_{\rho}(0).
\end{align*}
Because the function $\partial_{e}u$ takes its maximum on $\overline{\tphil^{i}\cap B_{\rho}(0)}$ in $x=0$,
Hopf's maximum principle hence yields
$\lambda=\partial^2_{e}u(0)<0$. Consequently $x=0$ is a non-degenerate maximum point.
\end{proof}
As a consequence of Lemma \ref{convex},  the superlevel sets $S_c:=\{u>c\}$
are strictly convex for all $c$ sufficiently close to $u_{max}:=\sup_{\tphil} u$, which
according to \eqref{bd} is close to $1$. Thus there exists $c^{*}<u_{max}$ such that $S_{c}$
is convex for all $c^{*}\leq c<u_{max}$.
\medskip

We would now like to investigate convexity of superlevel sets of $u$ further away from $u_{max}$.
Suppose that  $c_{*}>\inf_{\tphil}u$ is some value such that
\begin{quote}\textbf{Hypothesis (H1)}:
\begin{align*}
S_{c_{*}}=\{u>c_{*}\}\text{ is convex and precompact and } \partial S_{c_{*}}\text{ is smooth.}
\end{align*}
\end{quote}
We assume that $c^{*}>c_{*}$ since otherwise there is nothing to prove.
We will show that all sublevel sets $S_c$ for $c_{*}< c<c^{*}$ are convex.
\medskip

In the remainder of this section we will write the Euler-Lagrange equation of $u$ in the form
$\Delta u=f(u)$, where
\begin{align}
f(u):=\frac{1}{\phi^2}G'(u)+\frac{\lambda_{\phi}}{\phi}+\frac{\lambda_{\omega}}{\phi}\chi'(u).\label{rhs}
\end{align}
Note that because of the smoothness of $G$ and $\chi$, $f$ is also smooth. We will work under the assumption:
\begin{quote}\textbf{Hypothesis (H2)}:
\begin{align*}
f(c_{*})\geq 0 \quad\text{and}\quad f'(c)> 0\quad\hbox{for all}\quad c_{*}<c<u_{max}.
\end{align*}
\end{quote}
Our main theorem is:
\begin{theorem}\label{convex5new}
Let $u$ solve $\Delta u=f(u)$ in $\tphil$, where $f$ is given by \eqref{rhs}.
Assume $u$ takes its unique maximum point $u_{max}$ in $x=0$ and $\partial_{e}u< 0$ along all rays $e$ starting at the origin.
Let $c_{*}<u_{max}$ be a level  such that Hypotheses{ \bf (H1)} and {\bf (H2)} hold.
Then $S_{c}$ is convex
for all $c_{*}< c<u_{max}$.
\end{theorem}
\medskip

In light of Theorem~\ref{steinersym} and translating if necessary, the assumption of the unique maximum in zero and directional derivative on rays is without loss of generality for the desired application.

Following the strategy of \cite{CS}, we will work with the following definitions.
\begin{definition}
We will call three  points $x,y,z\in S_{c_{*}}$ a \uline{triple} if $z=tx+(1-t)y$ for some $t\in[0,1]$, and we will write $z\in [x,y]$ for short and speak of ``the triple'' $(x,y,z)$.  We will call a triple \uline{admissible} if it is an element of
\begin{align}
\label{quasicon0} \A := \{ (x,y,z) \colon   x,y \in S_{c_{*}},\; x,y,z\hbox{ are a triple},\, \text{and } \: u(x)>u(z)\}.
\end{align}
\end{definition}

Note that $\A$ is not empty since $u$ is strictly decreasing along all rays emanating from the origin.
Now consider the function
\begin{align*}
\F(x,y,z):=u(y)-u(z),\qquad\hbox{for}\quad (x,y,z)\in\A.
\end{align*}
Because of the equivalence in Lemma~\ref{convex1} below, we will be interested in
\begin{align*}
 \sup_{(x,y,z)\in\A}\F(x,y,z).
\end{align*}
Let $(x_k,y_k,z_k)\in\A$ be a maximizing sequence for $\F$.
By compactness of $\overline{S_{c_{*}}}$, there exist limit
points $x_0,y_0,z_0\in \overline{S_{c_{*}}}$. By passing to suitable subsequences,
we may assume that $x_k\to x_0$, $y_k\to y_0$ and $z_k\to z_0$.
Moreover
\begin{align}
z_0=t_0 x_0+(1-t_0)y_0\qquad \text{for some $t_0\in[0,1]$}\label{tzero}
\end{align}
and $u(x_0)\geq u(z_0)$. A priori it may be that
$x_0=z_0$ or $z_0=y_0$ or even $x_0=z_0=y_0$, however see Remark~\ref{contra}.
\medskip
In \cite{CS} it is observed that $\F$ can be used to capture convexity:
\begin{lemma}\label{convex1}
The convexity of $S_c$ for all $c_{*}\leq c\leq u_{max}$ is equivalent to
\begin{align}
\label{quasicon1} \sup_{(x,y,z)\in\A}\F(x,y,z)\leq 0.
\end{align}
\end{lemma}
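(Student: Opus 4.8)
The plan is to prove the two implications separately, exploiting the geometric meaning of the quantity $F(x,y,z)=u(y)-u(z)$ over the admissible set $\A$. Recall that a triple $(x,y,z)$ with $z\in[x,y]$, $x,y\in S_{c_*}$ and $u(x)>u(z)$ records a potential failure of convexity of the superlevel set $S_{u(z)}$: the endpoints $x,y$ and the interior point $z$ all lie on a common segment, $x$ lies in $S_{u(z)}$, but $z$ does \emph{not} lie in $S_{u(z)}$ unless $u(z)\geq u(z)$ holds with the right inequality for $y$ as well. The precise bookkeeping is that $z\in[x,y]$ witnesses nonconvexity of $\{u\geq c\}$ exactly when $u(x)\geq c$ and $u(y)\geq c$ but $u(z)<c$, i.e. when $\min\{u(x),u(y)\}>u(z)$; and $F(x,y,z)=u(y)-u(z)>0$ together with $u(x)>u(z)$ is exactly this condition with $c=u(z)$.

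First I would prove the contrapositive of one direction: if some $S_c$ with $c_*\le c\le u_{max}$ is \emph{not} convex, then $\sup_\A F>0$. Nonconvexity of $S_c=\{u>c\}$ gives points $p,q\in S_c$ and an interior point $r=tp+(1-t)q$, $t\in(0,1)$, with $u(r)\le c$. Relabel so that $u(p)\ge u(q)$; then $u(p)\ge u(q)>c\ge u(r)$. One subtlety: the definition of $\A$ requires $x,y\in S_{c_*}$, which holds here since $S_c\subseteq S_{c_*}$ for $c\ge c_*$; and it requires the strict inequality $u(x)>u(z)$, which holds since $u(p)>c\ge u(r)$. Thus $(p,q,r)\in\A$ and $F(p,q,r)=u(q)-u(r)>c-c=0$, so $\sup_\A F>0$. (If $S_{c_*}$ itself is nonconvex one must be slightly careful that the witnessing segment still has endpoints in $S_{c_*}$; but nonconvexity of $S_{c_*}$ produces exactly such a segment by definition, so this is immediate.)

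Conversely, suppose $\sup_\A F\le 0$; I want to deduce convexity of every $S_c$, $c_*\le c\le u_{max}$. Fix such a $c$ and suppose for contradiction $S_c$ is not convex. By the previous paragraph's construction we obtain $(p,q,r)\in\A$ with $F(p,q,r)=u(q)-u(r)>0$, contradicting $\sup_\A F\le 0$. Hence $S_c$ is convex. This direction is essentially the contrapositive of the first, so the real content is organizing the equivalence cleanly; I would state it as: $S_c$ convex for all $c$ in the range $\iff$ there is no $(x,y,z)\in\A$ with $\min\{u(x),u(y)\}>u(z)$ $\iff$ $\sup_\A F\le 0$, where the last step uses that on $\A$ one already has $u(x)>u(z)$, so the only remaining constraint for a ``bad'' triple is $u(y)>u(z)$, i.e. $F>0$.

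The main obstacle, such as it is, is matching the open/closed and strict/non-strict conventions: $S_c$ is defined with a strict inequality $\{u>c\}$ but the range of $c$ includes $u_{max}$ (where $S_{u_{max}}=\emptyset$ is trivially convex) and $c_*$ (where convexity of $S_{c_*}$ is Hypothesis~(H1)); meanwhile $\A$ uses $x,y\in S_{c_*}$ (open) but limits of maximizing sequences land in $\overline{S_{c_*}}$. I would handle this by noting that convexity of $\{u>c\}$ for all $c$ in a range is equivalent to convexity of $\{u\ge c\}$ for all $c$ in that range (each family is obtained from the other by taking unions/intersections over nearby levels, and convexity is preserved under nested unions and arbitrary intersections), which lets me pass freely between strict and non-strict superlevel sets and makes the witnessing-segment argument go through without boundary fuss. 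No use of the PDE \eqref{eleq} or of Hypotheses (H1)/(H2) is needed for this lemma — it is purely the definitional reformulation of convexity as an inequality for $F$ — so the proof should be short.
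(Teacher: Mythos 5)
Your approach is the paper's --- unpack the definition of $\A$ directly against the definition of convexity --- and all the ingredients are in your write-up, but there is an organizational flaw: your ``First'' and ``Conversely'' paragraphs establish the \emph{same} implication, namely $\sup_\A F\le 0 \Longrightarrow$ convexity (the first as a contrapositive, the second directly), and you acknowledge this. The remaining implication --- convexity of every $S_c$ forces $\sup_\A F\le 0$ --- is only gestured at in the chain-of-equivalences aside and in the ``bookkeeping'' remark; it should be carried out. It is short: if $(x,y,z)\in\A$ with $F(x,y,z)>0$, then $u(x)>u(z)$ and $u(y)>u(z)$; since $S_{c_*}$ is convex by (H1) and $z\in[x,y]$ with $x,y\in S_{c_*}$, one has $z\in S_{c_*}$, hence $c:=u(z)\in(c_*,u_{max})$, and then $x,y\in S_c$ while $z\in[x,y]\setminus S_c$, so $S_c$ is not convex. (The paper's Step~1 reaches the same conclusion by case-splitting on $u(x)\ge u(y)$ vs.\ $u(y)>u(x)$; the content is identical.) Lastly, the digression about passing between $\{u>c\}$ and $\{u\ge c\}$ is a red herring: both directions go through working only with the open sets $S_c=\{u>c\}$, and the union/intersection trick you propose would in any case fail at the endpoint $c=c_*$, where the range contains no strictly smaller level to approximate by; simply drop that remark.
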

\begin{proof}
 \uline{Step 1}. We begin by assuming that the sets $S_c$ are convex for all $c_{*}<c<u_{max}$.
 Let $(x,y,z)\in\A$. If $u(y)>u(x)$, then convexity of the
superlevel sets gives $u(z)\geq u(x)$ for all $z\in [x,y]$ and $(x,y,z)$ is not admissible.
Hence any admissible triple satisfies $u(x)\geq u(y)$. However then
the convexity of the superlevel sets gives $u(z)\geq u(y)$, so that $\F(x,y,z)\leq 0$, and taking the supremum over
$\A$ preserves this inequality.
\newline
\uline{Step 2}. We now assume $\sup_{(x,y,z)\in\A}\F(x,y,z)\leq 0$ and will show that $S_{c}$ is convex for all $c_{*}<c<u_{max}$ by showing that for any
$x,y\in S_{c}$ and any $z\in[x,y]$, there holds $z\in S_{c}$.
On the one hand, if $(x,y,z)$ is admissible, then our assumption on $\F$ yields $u(z)\geq u(y)>c$ (and $z\in S_c$). On the other hand if $(x,y,z)$
is not admissible, then by definition~\ref{quasicon0} there holds
$u(z)\geq u(x)>c$ (and $z\in S_c$).
\end{proof}
In order to prove the convexity of $S_c$ for all $c_{*}\leq c\leq u_{max}$, we will use Lemma \ref{convex1} and argue by
contradiction. Thus we assume that \eqref{quasicon1} does not hold and we will discuss the extremal situation, by which we mean the following:
\begin{definition}\label{def:max}
We will call $(x_0,y_0,z_0)$ a \uline{positive extremal} if there exists a maximizing sequence $(x_k,y_k,z_k)\in\A$ with $x_k\to x_0$, $y_k\to y_0$ and $z_k\to z_0$ such that
\begin{itemize}
\item[($P_1$)]\quad $x_0,y_0,z_0\in\overline{S_{c_{*}}}$ and $z_0=t_0 x_0+(1-t_0)y_0$ for some $t_0\in[0,1]$;
\item[($P_2$)]\quad $u(x_0)\geq u(z_0)$;
\item[($P_3$)]\quad there holds $0<\sup_{(x,y,z)\in\A}\F(x,y,z)=\F(x_0,y_0,z_0)$, so that
\begin{align}\label{convexass2}
u(y_0)>u(z_0).
\end{align}
\end{itemize}
\end{definition}
Our plan in order to show  convexity is as follows.
\begin{itemize}
\item Using maximizing sequences $(x_k,y_k,z_k)$,
we  derive necessary conditions for any positive extremal $(x_0,y_0,z_0)$.
The proofs rely heavily on Lemma \ref{convex}, however they do not explicitly use the equation for $u$. In particular {\bf{(H2)}} is not needed.

\item We then assume for a contradiction that $\sup_{(x,y,z)\in\A } \F(x,y,z)>0$. It is only here that Hypothesis {\bf{(H2)}} is needed.
The necessary conditions for the limit $(x_0,y_0,z_0)$ are used to define admissible sequences $(x_k,y_k,z_k)$ that lead to a contradiction.
\end{itemize}

\begin{remark}\label{contra}
Clearly for any positive extremal there holds $y_0\ne z_0$ (i.e., $t_0\neq 0$), since this would imply $\F(x_0,y_0,z_0)=0$, in violation of ($P_3$). Also
$x_0\ne y_0$, since this would yield $x_0=y_0=z_0$ (by ($P_1$)), again violating  ($P_3$). On the other hand, the case $x_0=z_0$ cannot be excluded.
\end{remark}

\begin{lemma}\label{y0eq0}
If  the triple $(x_0,y_0,z_0)$ is a positive extremal, then $y_0\ne 0$.
\end{lemma}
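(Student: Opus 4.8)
The plan is to argue by contradiction: suppose $(x_0,y_0,z_0)$ is a positive extremal with $y_0=0$. Since $y_0=0$ is the unique maximum of $u$, we have $u(y_0)=u_{max}\geq u(x)$ for every $x\in\tphil$, and in particular $u(y_0)\geq u(x_0)$. Combined with $(P_2)$, which gives $u(x_0)\geq u(z_0)$, we obtain $u(y_0)\geq u(x_0)\geq u(z_0)$, and $(P_3)$ forces the strict inequality $u(y_0)>u(z_0)$, so $z_0\neq y_0=0$; by Remark~\ref{contra} we already knew $t_0\neq 0$, consistent with this. The idea now is to exhibit a competitor triple in $\A$ whose $F$-value exceeds $F(x_0,y_0,z_0)$, contradicting extremality.

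The key geometric observation is that since $y_0=0$ is an interior point of every superlevel set $S_c$ with $c<u_{max}$, and in particular of $S_{c_*}$, we have room to move $y_0$ slightly. First I would handle the degenerate possibility $x_0=z_0$: then $F(x_0,y_0,z_0)=u(y_0)-u(z_0)=u_{max}-u(x_0)$, and since $x_0\in\overline{S_{c_*}}$ is not the maximum (as $F>0$), we can replace $y_0$ by a point $y_k\to 0$ along the segment from $x_0$ through $0$ and extend slightly past $0$; more cleanly, since the supremum of $F$ over $\A$ is at least $u_{max}-\inf_{\overline{S_{c_*}}}u > u_{max}-u(x_0) = F(x_0,y_0,z_0)$ (taking a triple with $y$ near the max and $z$ near a point of $\overline{S_{c_*}}$ where $u$ is near its infimum on that set — admissibility holds because $u$ strictly decreases along rays from $0$), we already contradict $(P_3)$. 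So the real case is $x_0\neq z_0$, i.e., $x_0,z_0,y_0=0$ are three genuinely distinct collinear points with $z_0\in[x_0,0]$.

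In that case I would perturb $y_0=0$ to a nearby point $\tilde y$ on the ray from $x_0$ through $0$, lying just beyond $0$ (on the far side from $x_0$), and keep $\tilde z = z_0$ fixed and $\tilde x=x_0$ fixed; then $\tilde z\in[\tilde x,\tilde y]$ still holds for $\tilde y$ close to $0$, the triple $(\tilde x,\tilde y,\tilde z)$ lies in $\A$ (since $\tilde x=x_0\in S_{c_*}$, $\tilde y$ is interior to $S_{c_*}$ for small perturbation, and $u(\tilde x)=u(x_0)>u(z_0)=u(\tilde z)$ by $(P_2)$ and $(P_3)$), and $F(\tilde x,\tilde y,\tilde z)=u(\tilde y)-u(z_0)$. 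It remains to choose the direction so that $u(\tilde y)>u(0)=u_{max}$ — but that is impossible since $u_{max}$ is the global max, so this naive variation only gives $\leq$. The correct move is instead to use that $0$ is a \emph{nondegenerate} maximum (Lemma~\ref{convex}): along the segment $[x_0,0]$, the function $t\mapsto u(tx_0)$ has a strict nondegenerate max at $t=0$, hence is \emph{strictly convex} near $t=0$ composed appropriately — more precisely, I would instead move $x_0$: replace $x_0$ by $\tilde x = x_0+s(z_0-x_0)$ for small $s>0$ (pushing $x_0$ toward $z_0$ along the line), keep $y_0=0$ and keep the ratio so that the collinearity $z_0\in[\tilde x,0]$ persists, and show $u(\tilde x)>u(x_0)$ is \emph{not} what we need either.

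Rather, the cleanest route, and the one I expect the authors take: since $x_0\neq z_0$ and all three points are collinear with $z_0$ strictly between $x_0$ and $0$, slide the whole configuration. Define $\tilde z_0 = z_0 - \eps(x_0/|x_0|)\cdot(\text{unit vector along the line})$ — i.e., move $z_0$ a little toward $y_0=0$. Since $z_0\neq 0$ and $u$ is strictly decreasing along the ray from $0$ to $x_0$ (Theorem~\ref{steinersym}, as $z_0$ lies on such a ray), moving $z_0$ toward $0$ strictly \emph{increases} $u(z_0)$ — wrong direction, that decreases $F$. Moving $z_0$ away from $0$ (toward $x_0$) strictly \emph{decreases} $u(\tilde z_0)$, hence \emph{increases} $F$; and for small $\eps$ the new $\tilde z_0$ still lies in the open segment $(x_0,0)\subset\mathrm{int}\,S_{c_*}$ so $(x_0,0,\tilde z_0)\in\A$ still. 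Then $F(x_0,0,\tilde z_0)=u_{max}-u(\tilde z_0)>u_{max}-u(z_0)=F(x_0,y_0,z_0)$, contradicting $(P_3)$.

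\textbf{Main obstacle.} The crux is verifying admissibility of the perturbed triple — specifically that the perturbed middle point stays in $S_{c_*}$ and that the perturbed triple still satisfies the strict inequality $u(\tilde x)>u(\tilde z)$ — together with correctly orienting the perturbation so that $F$ strictly increases. The strict monotonicity of $u$ along rays from the origin (equation~\eqref{eneg}, hence $\partial_e u<0$) is exactly the tool that makes the sign of the variation of $u(z_0)$ unambiguous, and the fact that $y_0=0$ is interior to $S_{c_*}$ (from Lemma~\ref{bound}/\eqref{bd}, since $u_{max}$ is close to $1 > c_*$) is what keeps the configuration admissible under perturbation. No use of the equation or Hypothesis {\bf(H2)} is needed, consistent with the remark preceding the lemma.
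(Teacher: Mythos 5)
Your argument has two genuine errors, and the second one is fatal.

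In the case $x_0\neq z_0$, you never verify whether hypothesis $(P_2)$ is even compatible with $y_0=0$. Since $z_0=t_0x_0$ with $t_0\in(0,1)$ and $u$ is strictly decreasing along rays from the origin (Theorem~\ref{steinersym}), one has $u(z_0)>u(x_0)$, which directly contradicts $(P_2)$: $u(x_0)\geq u(z_0)$. That is the whole argument in this case, and it is what the paper does. Instead you accept $(P_2)$ and try to build a competitor triple $(x_0,0,\tilde z_0)$ by sliding $z_0$ toward $x_0$. But this competitor is \emph{not} admissible: $\tilde z_0$ lies strictly between $0$ and $x_0$, so the strict decrease along the ray gives $u(\tilde z_0)>u(x_0)$, violating the requirement $u(x)>u(z)$ in the definition of $\A$ in~\eqref{quasicon0}. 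Your increase of $F$ is real, but it happens outside $\A$ and proves nothing about $\sup_{\A}F$.

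The case $x_0=z_0$ also has a gap. You compare $F(x_0,y_0,z_0)=u_{max}-u(x_0)$ to the claimed lower bound $u_{max}-\inf_{\overline{S_{c_*}}}u=u_{max}-c_*$ for $\sup_\A F$, but the strict inequality you need, $u(x_0)>c_*$, is not available at this point in the development (it is only established later, in Lemma~\ref{convex21}, which itself uses Lemma~\ref{y0eq0}). If $u(x_0)=c_*$, your comparison collapses to an equality and gives no contradiction. The paper's treatment of this case uses the maximizing sequence directly: from $u(x_k)>u(z_k)$ and $(x_k-z_k)/|x_k-z_k|\to x_0/|x_0|=:e_0$ it deduces $\partial_{e_0}u(x_0)\geq 0$ in the limit, contradicting the strict inequality $\partial_{e_0}u(x_0)<0$ from~\eqref{eneg}. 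This avoids any reference to $c_*$. I would suggest you adopt both of these simpler arguments.
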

\begin{proof}
For a contradiction we consider any triple $(x_0,y_0,z_0)$ with $y_0=0$, i.e.,
$y_0$ is the unique point
where $u$ takes its maximum. We will show that $(x_0,y_0,z_0)$ is not the limit of a sequence of admissible
triples $(x_k,y_k,z_k)\in\A$ as $k\to\infty$. Thus assume that there exists a sequence $(x_k,y_k,z_k)\in\A$
such that $\lim_{k\to\infty}(x_k,y_k,z_k)=(x_0,y_0,z_0)$.
We distinguish two cases.

\medskip

\noindent 1. If $y_0=0$ and $x_0\ne z_0$,
then by ($P_1$), we have $z_0=t x_0$ for some $t\in (0,1]$. Since by Lemma \ref{convex}),
 $u$ is strictly decreasing along any ray starting in the origin, there holds $u(z_0)>u(x_0)$, which contradicts ($P_2$).

\medskip

\noindent
2. Now suppose $y_0=0$ and $x_0= z_0$.
Because $(x_k,y_k,z_k)$ is admissible, we have $u(x_k)>u(z_k)$. From the collinearity of $(x_0,y_0,z_0)$ and $y_0=0$, we deduce that
\begin{align*}
\frac{x_k-z_k}{\vert x_k-z_k\vert}=\frac{x_k-y_k}{\vert x_k-y_k\vert}\to\frac{x_0}{\vert x_0\vert}=: e_0\quad\text{as $k\to\infty$.}
\end{align*}
Hence
\begin{align*}
0<\frac{u(x_k)-u(z_k)}{\vert x_k-z_k\vert}
=
\int\limits_{0}^{1}\frac{x_k-z_k}{\vert x_k-z_k\vert}\cdot Du(s x_k+(1-s)z_k)\:ds
\to
e_0\cdot Du(x_0)=\partial_{e_{0}}u(x_0),
\end{align*}
and we obtain in the limit $k\to\infty$ that
$
\partial_{e_{0}}u(x_0)\geq 0,
$
which contradicts the fact that the directional derivative is strictly negative along rays from the origin (cf.\ Theorem \ref{steinersym}).

\end{proof}
By definition, any positive extremal lies in $\overline{S_{c_{*}}}$; we now show that, in fact, any positive extremal lies in the interior.
\begin{lemma}\label{convex21}
For any positive extremal $(x_0,y_0,z_0)$, there holds
\begin{align*}
\text{(i) }\,y_0\in S_{c_{*}}\quad\text{ and }\quad
\text{(ii) } \,z_0\in S_{c_{*}}.
\end{align*}
\end{lemma}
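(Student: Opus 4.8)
The plan is to prove (i) by a one-line comparison and (ii) by contradiction, invoking only the convexity of $S_{c_{*}}$ from Hypothesis {\bf{(H1)}} together with the monotonicity and regularity of $u$ from Theorem~\ref{steinersym} and Lemma~\ref{convex}; as announced for Lemmas~\ref{y0eq0}--\ref{convex4}, neither the Euler--Lagrange equation nor Hypothesis {\bf{(H2)}} will be needed. For (i): since $(x_0,y_0,z_0)$ is a positive extremal, ($P_3$) (that is, \eqref{convexass2}) gives $u(y_0)>u(z_0)$, while ($P_1$) gives $z_0\in\overline{S_{c_{*}}}$ and hence $u(z_0)\geq c_{*}$ by continuity of $u$; combining these, $u(y_0)>c_{*}$, i.e.\ $y_0\in S_{c_{*}}$.

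For (ii), I would suppose for contradiction that $z_0\notin S_{c_{*}}$. Since $z_0\in\overline{S_{c_{*}}}$ and $S_{c_{*}}=\{u>c_{*}\}$ is open, this forces $u(z_0)=c_{*}$ and $z_0\in\partial S_{c_{*}}$. Write $z_0=t_0x_0+(1-t_0)y_0$; by Remark~\ref{contra} we have $t_0\in(0,1]$, and by (i) we already know $y_0\in S_{c_{*}}$. By {\bf{(H1)}} the convex set $S_{c_{*}}$ has a supporting hyperplane $T$ at $z_0$; let $H$ be the closed half-space bounded by $T$ with $\overline{S_{c_{*}}}\subseteq H$, so that, $S_{c_{*}}$ being open, $S_{c_{*}}\subseteq\operatorname{int}H$. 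In the case $t_0\in(0,1)$, the point $z_0$ lies strictly between $x_0\in\overline{S_{c_{*}}}\subseteq H$ and $y_0\in\operatorname{int}H$; since the open segment joining a point of $H$ to a point of $\operatorname{int}H$ lies in $\operatorname{int}H$, this gives $z_0\in\operatorname{int}H$, contradicting $z_0\in T$.

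It remains to treat the case $t_0=1$, i.e.\ $z_0=x_0$; here I would reuse the difference-quotient argument from case~(2) of the proof of Lemma~\ref{y0eq0}. Since $u(z_0)=c_{*}\in(\inf_{\tphil}u,\,u_{max})$, the point $z_0$ is neither the maximum nor the minimum of $u$, so $\nabla u(z_0)\neq0$ by Theorem~\ref{steinersym}; as $\partial S_{c_{*}}$ coincides near $z_0$ with the regular level set $\{u=c_{*}\}$, $\nabla u(z_0)$ is the inward normal, $\nabla u(z_0)=-|\nabla u(z_0)|\,n$ with $n$ the outer unit normal of $H$. Since $y_0\in S_{c_{*}}\subseteq\operatorname{int}H$ while $z_0\in\partial H$, the unit vector $e_0:=(y_0-z_0)/|y_0-z_0|$ satisfies $\langle e_0,n\rangle<0$, hence $\partial_{e_0}u(z_0)=|\nabla u(z_0)|\,\langle e_0,-n\rangle>0$. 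On the other hand, admissibility of the maximizing sequence gives $u(x_k)>u(z_k)$ with $x_k,z_k\to z_0$ and $(z_k-x_k)/|z_k-x_k|=(y_k-x_k)/|y_k-x_k|\to e_0$; writing $u(z_k)-u(x_k)=\int_0^1\nabla u(x_k+s(z_k-x_k))\cdot(z_k-x_k)\,ds$, dividing by $|z_k-x_k|>0$ and letting $k\to\infty$ yields $\partial_{e_0}u(z_0)\leq0$, a contradiction. Hence $u(z_0)>c_{*}$, i.e.\ $z_0\in S_{c_{*}}$.

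The only delicate step is the case $t_0=1$ ($x_0=z_0$), which Remark~\ref{contra} flags as genuinely unavoidable: convex geometry alone does not conclude there, and one must combine the fine behaviour of $u$ at $z_0$---a nonvanishing, correctly oriented gradient, from Theorem~\ref{steinersym}---with the limiting passage from the discrete admissibility inequalities to a one-sided directional-derivative bound, exactly the mechanism already used in Lemma~\ref{y0eq0}. A minor technical caveat is the meaning of ``supporting hyperplane'' on the torus $\tphil$; as elsewhere in this section, one works in a chart realizing $S_{c_{*}}$ as a convex body in $\R^d$.
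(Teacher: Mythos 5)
Your proof is correct and follows essentially the same route as the paper: part (i) is identical, and for part (ii) you likewise use convexity of $S_{c_*}$ to reduce to the case $x_0=z_0\in\partial S_{c_*}$ and then contradict admissibility of the maximizing sequence via the sign of the directional derivative at $z_0$. The only cosmetic difference is that the paper invokes Hopf's boundary lemma to get strict monotonicity of $u$ inward from $\partial S_{c_*}$, whereas you obtain the same conclusion directly from $\nabla u(z_0)\neq 0$ (Theorem~\ref{steinersym}) and the smoothness of $\partial S_{c_*}$ via an explicit difference-quotient passage to the limit, which if anything is more in keeping with the paper's remark that Lemmas~\ref{y0eq0}--\ref{convex4} do not explicitly use the equation for $u$.
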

\begin{proof}
Since necessarily $y_0, z_0\in\overline{S_{c_{*}}}$, the issue is to show $y_0, z_0\notin \partial S_{c_{*}}$. We begin by considering $y_0$. Since $z_0\in\overline{S_{c_{*}}}$, we have $u(z_0)\geq c_{*}$. Combining this with \eqref{convexass2} implies (i).

We now turn to $z_0$. Let us assume for a contradiction that $z_0\in\partial S_{c_{*}}$,
i.e., $u(z_0)=c_{*}$. By (i) we have $y_0\in S_{c_{*}}$. By ($P_1$) the points $x_0$, $y_0$
and $z_0$ are collinear. Since $S_{c_{*}}$ is convex, the ray that starts in $y_0$ and passes
through $z_0$ and $x_0$ intersects $\partial S_{c_{*}}$ in exactly one point. By ($P_1$) and
($P_2$) the point $x_0$ must be in $\partial S_{c_{*}}$ and thus $x_0=z_0$. Now let
$(x_k,y_k,z_k)\in\A$ be a maximizing sequence converging to $(x_0,y_0,z_0)$ as
in Definition~\ref{def:max}. For sufficiently large $k$ the following holds:
Since $x_k,z_k\to x_0=z_0\in\partial S_{c_{*}}$, for $k$ large there exists a ray that
starts in $y_k$, passes through
$z_k$ and $x_k$ and intersects $\partial S_{c_{*}}$ in exactly one point.
This point is denoted by $\tilde{x}_k$. Note that $x_k\in[\tilde{x}_{k},z_k]$ and $\tilde{x}_k\to x_0$ as $k\to\infty$. Since {\bf{(H1)}} holds and since $\partial_{e}u< 0$ on rays from the origin, for $k$ sufficiently large there holds $\partial_{\nu}u(\tilde{x}_k)<0$, where $\nu$ denotes the outer unit normal vector in $\tilde{x}_k\in \partial S_{c_{*}}$. This implies that $u$ is strictly decreasing along $[z_k,\tilde{x}_k]$ and hence $u(z_k)>u(x_k)$, contradicting the admissibility of $(x_k,y_k,z_k)$. As a consequence $z_0\notin\partial S_{c_{*}}$.
\end{proof}
\subsection{Optimality conditions for extremals}
We assume $(x_0,y_0,z_0)$ is a positive extremal, so that in particular \eqref{convexass2} holds.
Note that
\begin{align*}
z_0=t_0\,x_0+(1-t_0)\,y_0
\end{align*}
for some $t_0\in (0,1]$.
Our first optimality condition is:
\begin{lemma}\label{optcond2}
For any positive extremal with $0<t_0\leq 1$ we have $(y_0-x_0)\cdot\nabla u(z_0)=0$.
\end{lemma}

\begin{proof}
We remark that according to Definition~\ref{def:max}, there exists an admissible sequence $(x_k,y_k,z_k)$ converging to $(x_0,y_0,z_0)$, i.e., a sequence such that
\begin{align}
  u(x_k)-u(z_k)>0.\label{posxz}
\end{align}
Suppose $t_0=1$. Expanding \eqref{posxz} around $x_0=z_0$ using
\begin{align*}
  z_k=t_k x_k+(1-t_k)y_k=x_0+(1-t_k)(y_0-x_0+(y_k-y_0))+t_k(x_k-x_0)
\end{align*}
gives:
\begin{align*}
  0< -(1-t_k)(y_0-x_0)\cdot \nabla u(x_0)+ (1-t_k)\Big((x_k-x_0)\cdot \nabla u(x_0)-(y_k-y_0)\cdot\nabla u(x_0)   \Big) +O((1-t_k)^2),
\end{align*}
so that $(y_0-x_0)\cdot \nabla u(x_0)\leq 0$. However if the strict inequality holds, we can choose $\epsilon>0$ and  $z_\epsilon:=z_0+\epsilon (y_0-z_0)$ so that $(x_0,y_0,z_\epsilon)$ is admissible and contradicts the optimality of $(x_0,y_0,z_0)$. Therefore $(y_0-x_0)\cdot \nabla u(x_0)=0$.

In case $t_0\in (0,1)$ the proof is similar but simpler, since we can choose perturbations of $z_0$ in either direction along $[x_0,y_0]$.
\end{proof}

For the remainder of our arguments, we will use a sequence $(t_k)_k$ with $t_k\in [0,1]$, $\lim_{k\to\infty}t_k=t_0$ and the perturbations
\begin{align}
\label{xk} x_k&:=x_0+(t_0-t_k)\Phi_{x}^{1}+(t_0-t_k)^2\Phi_{x}^{2}\\
\label{yk} y_k&:=y_0+(t_0-t_k)\Phi_{y}^{1}+(t_0-t_k)^2\Phi_{y}^{2},
\end{align}
where $\Phi_{x}^{1}, \Phi_{x}^{2}, \Phi_{y}^{1}$ and $\Phi_{y}^{2}$ are given vectors.
At this point, our proofs diverge from those of \cite{CS}. Our goal is to give a unified and elementary treatment of all the necessary results in terms only of these perturbations and the associated optimality conditions.

For $\vert \Phi_{x,y}^{1,2}\vert<r$ and $r>0$ sufficiently small, Lemma \ref{convex21} implies that $x_k$, $y_k$ and hence $z_k$ are in $S_{c_{*}}$.
In order to obtain an admissible triple $(x_k,y_k,z_k)$, we must on the one hand have
\begin{align*}
z_k=t_k\,x_k+(1-t_k)\,y_k
\end{align*}
which is equivalent to
\begin{align}
\label{zk} z_k
&=
z_0+(t_0-t_k)\left[y_0-x_0+t_0\,\Phi_{x}^{1}+(1-t_0)\,\Phi_{y}^{1}\right]\\
\nonumber&+
(t_0-t_k)^2\left[\Phi_{y}^1-\Phi_{x}^1+t_0\,\Phi_{x}^2+(1-t_0)\Phi_{y}^2\right]
+
(t_0-t_k)^3\left[\Phi_{y}^2-\Phi_{x}^2\right],
\end{align}
and on the other hand satisfy $u(x_k)-u(z_k)>0$. An expansion with respect to $t_k$ at $t=t_0$ yields necessary conditions for the perturbations $\Phi_{x}^{1}, \Phi_{y}^{1}, \Phi_{x}^{2}$ and $\Phi_{y}^{2}$: An admissible perturbation must satisfy:
\begin{align}\label{optcond0}
0&\overset{!}{<}u(x_k)-u(z_k)=u(x_0)-u(z_0)\\
\nonumber&
+
(t_k-t_0)
\left\{t_0\,\Phi_{x}^{1}\cdot\nabla u(z_0)+(1-t_0)\,\Phi_{y}^{1}\cdot\nabla u(z_0)-\Phi_{x}^{1}\cdot\nabla u(x_0)\right\}\\
\nonumber&
+ (t_0-t_k)^2\biggl\{
-
\frac{t_0^2}{2}\left(\Phi_{x}^{1}-\Phi_{y}^{1}\right)\cdot D^2u(z_0)\cdot\left(\Phi_{x}^{1}-\Phi_{y}^{1}\right)
-
t_0\left(\Phi_{x}^{1}-\Phi_{y}^{1}\right)\cdot D^2u(z_0)\cdot\left(y_0-x_0\right)\\
\nonumber&
-
t_0\left(\Phi_{x}^{1}-\Phi_{y}^{1}\right)\cdot D^2u(z_0)\cdot\Phi_{y}^{1}
-
\frac{1}{2}\left(y_0-x_0\right)\cdot D^2u(z_0)\cdot\left(y_0-x_0\right)
-
\left(y_0-x_0\right)\cdot D^2u(z_0)\cdot\Phi_{y}^{1}\\
\nonumber&
-
\frac{1}{2}\Phi_{y}^{1}\cdot D^2u(z_0)\cdot\Phi_{y}^{1}
+
\left(\Phi_{x}^{1}-\Phi_{y}^{1}\right)\cdot\nabla u(z_0)
-
t_0\left(\Phi_{x}^{2}-\Phi_{y}^{2}\right)\cdot\nabla u(z_0)
-
\Phi_{y}^{2}\cdot\nabla u(z_0)\\
\nonumber&
+
\frac{1}{2}\Phi_{x}^{1}\cdot D^2u(x_0)\cdot\Phi_{x}^{1}
+
\Phi_{x}^2\cdot\nabla u(x_0)
\biggr\}
+
O(\vert t_0-t_k)\vert^3).
\end{align}
In view of \eqref{convexass2}, all admissible sequences satisfy
\begin{align}\label{maximality}
0&\geq u(y_k)-u(z_k)- \left(u(y_0)-u(z_0)\right)\\
\nonumber&=
 (t_k-t_0)\left\{t_0\,\Phi_{x}^{1}\cdot\nabla u(z_0)+(1-t_0)\,\Phi_{y}^{1}\cdot\nabla u(z_0)-\Phi_{y}^{1}\cdot\nabla u(y_0)\right\}\\
\nonumber&
+
(t_0-t_k)^2\biggl\{
-
\frac{t_0^2}{2}\left(\Phi_{x}^{1}-\Phi_{y}^{1}\right)\cdot D^2u(z_0)\cdot\left(\Phi_{x}^{1}-\Phi_{y}^{1}\right)
-
t_0\left(\Phi_{x}^{1}-\Phi_{y}^{1}\right)\cdot D^2u(z_0)\cdot\left(y_0-x_0\right)\\
\nonumber&
-
t_0\left(\Phi_{x}^{1}-\Phi_{y}^{1}\right)\cdot D^2u(z_0)\cdot\Phi_{y}^{1}
-
\frac{1}{2}\left(y_0-x_0\right)\cdot D^2u(z_0)\cdot\left(y_0-x_0\right)
-
\left(y_0-x_0\right)\cdot D^2u(z_0)\cdot\Phi_{y}^{1}\\
\nonumber&
-
\frac{1}{2}\Phi_{y}^{1}\cdot D^2u(z_0)\cdot\Phi_{y}^{1}
+
\left(\Phi_{x}^{1}-\Phi_{y}^{1}\right)\cdot \nabla u(z_0)
-
t_0\left(\Phi_{x}^{2}-\Phi_{y}^{2}\right)\cdot\nabla u(z_0)
-
\Phi_{y}^{2}\cdot\nabla u(z_0)
\\
\nonumber&
+
\frac{1}{2}\Phi_{y}^{1}\cdot D^2u(y_0)\cdot\Phi_{y}^{1}
+
\Phi_{y}^2\cdot\nabla u(y_0)
\biggr\}
+
O(\vert t_0-t_k\vert^3).
\end{align}
In both \ref{optcond0} and \ref{maximality}, the term $(y_0-x_0)\cdot\nabla u(z_0)$ vanishes because of Lemma \ref{optcond2}.

The core of most of the following arguments is: All perturbations that fulfill inequality \eqref{optcond0} must necessarily also fulfill inequality \eqref{maximality}. As a first consequence, we obtain $u(x_0)=u(z_0)$ (so that the first term in \eqref{optcond0} vanishes).
\begin{lemma}\label{optcond1}
For any positive extremal $(x_0,y_0,z_0)$ there holds $u(x_0) = u(z_0)$.
\end{lemma}
\begin{proof}
The statement is trivial when $t_0=1$ ($x_0=z_0$), so we assume $t_0\in(0,1)$. From \eqref{quasicon0} it follows that $u(x_0)\geq u(z_0)$. We assume for a contradiction that $u(x_0)>u(z_0)$. Thus there exists an index $k_0$ depending on $r>0$ such that for all
$\vert\Phi_{x,y}^{1,2}\vert<r$, the perturbations given by \eqref{xk} and \eqref{yk} satisfy
$u(x_k)-u(z_k)>0$ for all
$k\geq k_0$, i.e., \eqref{optcond0} is satisfied.
\medskip

\noindent
Then from \eqref{maximality} (choosing $k_0$ even larger if necessary), we deduce that all such perturbations fulfill
\begin{align*}
(t_k-t_0)\left\{(y_0-x_0)\cdot\nabla u(z_0)+t_0\,\Phi_{x}^{1}\cdot\nabla u(z_0)+
(1-t_0)\,\Phi_{y}^{1}\cdot\nabla u(z_0)-\Phi_{y}^{1}\cdot\nabla u(y_0)\right\}\leq 0.
 \end{align*}
Since we may choose the sign of $t_k-t_0$, the term in the curly brackets must vanish:
\begin{align}
\Phi_{y}^{1}\cdot \nabla u(y_0)
=
\left[(1-t_0)\Phi_{y}^{1}+t_0\Phi_{x}^{1}+(y_0-z_0)\right]\cdot\nabla u(z_0).\label{nextphi}
\end{align}
By choosing $\Phi_{x}^{1}=\Phi_{y}^{1}=0$, we obtain the necessary condition on the triple $\nabla u(z_0)\cdot(y_0-z_0)=0$, and \eqref{nextphi} simplifies to
\begin{align}
\Phi_{y}^{1}\cdot \nabla u(y_0)
=
\left[(1-t_0)\Phi_{y}^{1}+t_0\Phi_{x}^{1}\right]\cdot\nabla u(z_0)\label{nextphi2}
\end{align}
for all $\vert\Phi_{x,y}^{1,2}\vert<r$ and $k\geq k_0$.
\medskip

\noindent
Now choosing $\Phi_{y}^{1}=0$ in \eqref{nextphi2} gives
\begin{align*}
\Phi_{x}^{1}\cdot\nabla u(z_0)=0\qquad\text{for all\;}\vert\Phi_{x}^{1}\vert<r,
\end{align*}
which implies $\nabla u(z_0)=0$. Since $x=0$ is the only critical point of $u$ in
$S_{c_{*}}$ and $u(z_0)< u(y_0)$, this is a contradiction and concludes the proof that $u(x_0)=u(z_0)$.
\end{proof}
For the rest of the proofs, we will separately consider the case $0<t_0<1$ and the case $t_0=1$. (Recall from Remark~\ref{contra} that $t_0\neq 0$.) For the convenience of the reader, we collect two more optimality conditions here whose proofs for $t_0<1$ and $t_0=1$ are given separately below in Subsections~\ref{sec1} and \ref{sec3}.
\begin{lemma}\label{optcond22}
For any positive extremal $(x_0,y_0,z_0)$ we have
\begin{align}\label{optcond3}
\frac{\nabla u(z_0)}{\vert \nabla u(z_0)\vert}=
\frac{\nabla u(y_0)}{\vert \nabla u(y_0)\vert}=
\frac{\nabla u(x_0)}{\vert \nabla u(x_0)\vert}=:\xi_{0}.
\end{align}
Moreover in the case $t_0=1$, there holds $\vert\nabla u(y_0)\vert\ne\vert\nabla u(z_0)\vert$.
\end{lemma}
\begin{lemma}\label{optcond007}
Any positive extremal $(x_0,y_0,z_0)$ satisfies
\begin{align}\label{optcond6}
(y_0-x_0)\cdot D^2u(z_0)\cdot(y_0-x_0)=0.
\end{align}
\end{lemma}
\subsubsection{The case $0<t_0<1$}\label{sec1}
We begin by proving Lemmas \ref{optcond22} and \ref{optcond007} in the case $t_0<1$ and then collect two additional minor lemmas that we will use for the contradiction argument in this case.
It will be convenient in the remainder to denote
\begin{align}\label{ab}
a:=\frac{\vert\nabla u(x_0)\vert}{\vert\nabla u(z_0)\vert},\quad
b:=\frac{\vert\nabla u(y_0)\vert}{\vert\nabla u(z_0)\vert},\quad
X:=\Phi_{x}^{1}\cdot\xi_0,\quad\hbox{and}\quad Y:=\Phi_{y}^{1}\cdot\xi_0.
\end{align}
\begin{proof}[Proof of Lemma~\ref{optcond22} for $0<t_0<1$]
$ $\newline
\noindent
Taking into account Lemmas \ref{optcond2} and \ref{optcond1}, the first order conditions for $t_k>t_0$ can be summarized as:
\begin{align}\label{nec1}
&\Phi_{x}^{1}\cdot\nabla u(x_0)
\leq
\left(t_0\,\Phi_{x}^{1}+(1-t_0)\,\Phi_{y}^{1}\right)\cdot\nabla u(z_0)\\
\nonumber&\quad\Longrightarrow\quad
\Phi_{y}^{1}\cdot\nabla u(y_0)\geq
\nabla u(z_0)\cdot\left(t_0\,\Phi_{x}^{1}+(1-t_0)\,\Phi_{y}^{1}\right).
\end{align}
Choosing $\Phi_{x}^1=0$ and
$\Phi_{y}^{1}\cdot\nabla u(y_0)=0$, we obtain
\begin{align*}
0\geq \nabla u(z_0)\cdot\Phi_{y}^{1}\quad&\Longrightarrow\quad 0\leq\nabla u(z_0)\cdot\Phi_{y}^{1},
\end{align*}
so that  $\nabla u(z_0)\cdot\Phi_{y}^{1}=0$.
Thus $\nabla u(y_0)$ points in the same direction as $\nabla u(z_0)$.
Similarly the choice $\Phi_{y}^1=0$ and  $\Phi_{x}^{1}\cdot\nabla u(x_0)=0$ implies  that
$\nabla u(x_0)$ points in the same direction as $\nabla u(z_0)$.
\end{proof}
\begin{proof}[Proof of Lemma~\ref{optcond007} for $0<t_0<1$]
$ $\newline
\noindent
1. We choose $\Phi_{x}^{1}=\Phi_{y}^1=0$ so that the first-order terms in  \eqref{optcond0} and \eqref{maximality} vanish.
In this case inequality \eqref{optcond0} yields the admissibility condition
\begin{align}\label{second1}
0&\overset{!}\leq-\frac{1}{2}(y_0-x_0)\cdot D^2u(z_0)\cdot(y_0-x_0)
-
(1-t_0)\Phi_{y}^{2}\cdot\nabla u(z_0)\\
\nonumber&
-
t_0\Phi_{x}^{2}\cdot\nabla u(z_0)
+
\Phi_{x}^2\cdot\nabla u(x_0).
\end{align}
For such perturbations, \eqref{maximality} yields the necessary condition
\begin{align}\label{second2}
0&\geq
-
\frac{1}{2}\left(y_0-x_0\right)\cdot D^2u(z_0)\cdot\left(y_0-x_0\right)
-
(1-t_0)\Phi_{y}^{2}\cdot\nabla u(z_0)\\
\nonumber&-
t_0\Phi_{x}^{2}\cdot\nabla u(z_0)
+
\Phi_{y}^2\cdot\nabla u(y_0).
\end{align}
\medskip

\noindent

2. We begin by ruling out $(y_0-x_0)\cdot D^2u(z_0)\cdot(y_0-x_0)>0$. Using Lemma \ref{optcond22} and \eqref{ab}, \eqref{second1} and  \eqref{second2}can be expressed in the form:
\begin{align*}
\text{if}\qquad 0&\leq-\frac{1}{2\vert\nabla u(z_0)\vert}(y_0-x_0)\cdot D^2u(z_0)\cdot(y_0-x_0)
-
(1-t_0)\Phi_{y}^{2}\cdot\xi_{0}\\
\nonumber&
-
t_0\Phi_{x}^{2}\cdot\xi_{0}
+
a\Phi_{x}^2\cdot\xi_{0},
\end{align*}
then
\begin{align*}
0&\geq-\frac{1}{2\vert\nabla u(z_0)\vert}(y_0-x_0)\cdot D^2u(z_0)\cdot(y_0-x_0)
-
(1-t_0)\Phi_{y}^{2}\cdot\xi_{0}\\
\nonumber&
-
t_0\Phi_{x}^{2}\cdot\xi_{0}
+
b\Phi_{y}^2\cdot\xi_{0}.
\end{align*}
We obtain a contradiction by choosing $\Phi_{x}^{2}$ and $\Phi_{y}^{2}$ such that
\begin{align*}
a\,\Phi_{x}^{2}\cdot\xi_{0}<b\,\Phi_{y}^{2}\cdot\xi_{0}.
\end{align*}

\medskip

\indent
3. On the other hand
 $(y_0-x_0)\cdot D^2u(z_0)\cdot(y_0-x_0)<0$ is ruled out since then all sufficiently small vectors $\Phi_{x}^{2}$ and $\Phi_{y}^{2}$ induce  admissible perturbations according to \eqref{second1}, but violate \eqref{second2}.
\end{proof}

With the notation \eqref{ab} and using \eqref{optcond3}, we can rewrite \eqref{nec1} as
\begin{align}
\label{optcond40} a\;X\geq t_0\,X+(1-t_0)\,Y\quad&\Longrightarrow\quad b\,Y\leq t_0\,X+(1-t_0)\,Y.
\end{align}

\begin{lemma}\label{optcond51}
For $0<t_0<1$ \eqref{optcond40} implies the property:
 $a=1$ if and only if $b=1$.
\end{lemma}
\begin{proof}
1. If $a=1$, then \eqref{optcond40} states
\begin{align*}
Y\leq X\quad&\Longrightarrow\quad Y\leq\frac{t_0}{b+t_0-1}\,X\qquad\text{and hence}\qquad X\leq \frac{t_0}{b+t_0-1}\,X.
\end{align*}
Since $X$ can be chosen to have either sign, this inequality is an equality and $b=1$.
\medskip

\noindent
2. If $b=1$, then \eqref{optcond40} states
\begin{align*}
Y\leq \frac{a-t_0}{1-t_0}\,X\quad\Longrightarrow\quad Y\leq X\qquad\text{and hence}\qquad \frac{a-t_0}{1-t_0}\,X\leq X.
\end{align*}
Varying the sign of $X$ as above, this inequality is an equality and $a=1$.
\end{proof}
\begin{lemma}\label{optcond5}
If $0<t_0<1$, then \eqref{optcond40} implies the  properties:
\begin{itemize}
\item[(i)] $b \ne 1-t_0$
\item[(ii)] If $a=b$ then $a=b=1$ and no further information on $t_0$ is available.
\item[(iii)] If $a\ne b$ then $t_0=a\,\frac{1-b}{a-b}$. Moreover either $b<1$ and $a>1$ or $b>1$ and $a<1$.
\end{itemize}
\end{lemma}
\begin{proof}
1. (i) is established by noting that $b=1-t_0$ in \eqref{optcond40} would yield the false implication
\begin{align*}
Y\leq X\, \frac{a-t_0}{1-t_0}\quad\Longrightarrow\quad X\geq 0.
\end{align*}
\medskip

\noindent
2. We will now establish the useful equality
\begin{align}\label{t0eq}
\frac{a-t_0}{1-t_0}\,=\,\frac{t_0}{b-(1-t_0)},
\end{align}
from which we will then deduce (ii) and (iii).
We proceed via a case distinction. Assuming first that
 $b>1-t_0$ and recalling $0<t_0<1$, we deduce from \eqref{optcond40} that
\begin{align*}
Y\leq X\, \frac{a-t_0}{1-t_0}\quad\Longrightarrow\quad Y\leq X\;\frac{t_0}{b-(1-t_0)}\qquad\text{and hence}\qquad X\, \frac{a-t_0}{1-t_0}\,\leq\,X\,\frac{t_0}{b-(1-t_0)}.
\end{align*}
Since $X$ can have any sign we obtain~\eqref{t0eq}.
In the case $b<1-t_0$ a similar argument yields the same equality \eqref{t0eq}.
\medskip

\noindent
3. To establish (ii), we substitute $a=b$ into \eqref{t0eq} and easily check that $a=b=1$ and $t_0\in(0,1)$ is not further specified. (We recall that $a\neq 0$.)
\medskip

\noindent
4. To establish (iii), we assume $a\ne b$ and solve for $t_0$ in \eqref{t0eq}  to obtain
\begin{align*}
t_0=a\,\frac{1-b}{a-b}.
\end{align*}
The condition $0<t_0<1$ then implies either $b<1$ and $a>1$ or $b>1$ and $a<1$.
\end{proof}

\paragraph{Contradiction to the optimality of $(x_0,y_0,z_0)$ in the case $t_0<1$.}\label{sec2}
Assume that $(x_0,y_0,z_0)$ is a positive extremal (cf.\ Definition~\ref{def:max}).
With Lemmas \ref{optcond1} -  \ref{optcond007} we can rewrite the admissibility condition \eqref{optcond0} as
\begin{align}\label{optcond01}
0&\overset{!}\leq
(t_k-t_0)\left\{ T_1-\Phi_{x}^{1}\cdot\nabla u(x_0)\right\}+
(t_k-t_0)^2\biggl\{
T_2-t_0(\Phi_{x}^2-\Phi_{y}^2)\cdot\xi_{0}\vert\nabla u(z_0)\vert\notag\\
&\qquad
-
\Phi_{y}^2\cdot\xi_{0}\vert\nabla u(z_0)\vert+\frac{1}{2}\Phi_{x}^{1}\cdot D^2u(x_0)\cdot\Phi_{x}^{1}
\Phi_{x}^{2}\cdot\nabla u(x_0)\biggr\}
+
O(\vert t_k-t_0\vert^3),
\end{align}
where
\begin{align*}
T_1&:=\vert\nabla u(z_0)\vert\left\{t_0\Phi_{x}^{1}\cdot\xi_{0}+(1-t_0)\Phi_{y}^{1}\cdot\xi_{0}\right\},\\
T_2&:=
-
\frac{t_0^2}{2}\left(\Phi_{x}^{1}-\Phi_{y}^{1}\right)\cdot D^2u(z_0)\cdot\left(\Phi_{x}^{1}-\Phi_{y}^{1}\right)
-
t_0\left(\Phi_{x}^{1}-\Phi_{y}^{1}\right)\cdot D^2u(z_0)\cdot\left(y_0-x_0\right)\\
\nonumber&
-
t_0\left(\Phi_{x}^{1}-\Phi_{y}^{1}\right)\cdot D^2u(z_0)\cdot\Phi_{y}^{1}
-
\left(y_0-x_0\right)\cdot D^2u(z_0)\cdot\Phi_{y}^{1}
-
\frac{1}{2}\Phi_{y}^{1}\cdot D^2u(z_0)\cdot\Phi_{y}^{1}\\
\nonumber&
+
\left(\Phi_{x}^{1}-\Phi_{y}^{1}\right)\cdot\nabla u(z_0).
\end{align*}
The necessary condition \eqref{maximality} for admissible perturbations can then be expressed in the form
\begin{align}\label{maximality2}
0&\geq (t_k-t_0)\left\{T_1-\Phi_{y}^{1}\cdot\nabla u(y_0)\right\}+
(t_k-t_0)^2\biggl\{T_2-t_0(\Phi_{x}^2-\Phi_{y}^2)\cdot\xi_{0}\vert\nabla u(z_0)\vert\notag\\
&\qquad-\Phi_{y}^2\cdot\xi_{0}\vert\nabla u(z_0)\vert+\frac{1}{2}\Phi_{y}^{1}\cdot D^2u(y_0)\cdot\Phi_{y}^{1}+\Phi_{y}^{2}\cdot\nabla u(y_0)\biggr\}
+
O(\vert t_k-t_0\vert^3).
\end{align}
Recalling the notation \eqref{ab}, we set
\begin{align}\label{pintro}
\Phi_{x}^{1}=\frac{b}{a}\,\Phi_{y}^{1}\qquad\hbox{and}\qquad\Phi_{y}^{1}=p\in B_{r}(0).
\end{align}
As a consequence we obtain
\begin{align*}
T_1-\Phi_{x}^{1}\cdot\nabla u(x_0)
&=
\vert\nabla u(z_0)\vert\left\{\frac{b}{a}\,t_0\,\Phi_{y}^{1}\cdot\xi_{0}+(1-t_0)\,\Phi_{y}^{1}\cdot\xi_{0}\right\}
-
\frac{b}{a}\Phi_{y}^{1}\cdot\nabla u(x_0)\\
&=
\vert\nabla u(z_0)\vert\left\{\frac{b}{a}\,t_0+(1-t_0)-b\right\}\Phi_{y}^{1}\cdot\xi_{0}.
\end{align*}
Recalling the two options (ii) and (iii) from Lemma \ref{optcond5}, we observe that $a=b=1$ results in
$T_1-\Phi_{x}^{1}\cdot\nabla u(x_0)=0$ and the same is true if $a\ne b$ (after simplification). Hence the first-order term in \eqref{optcond01} vanishes. The same reasoning leads to the cancellation of the first-order term in \eqref{maximality2}:
\begin{align*}
T_1-\Phi_{y}^{1}\cdot\nabla u(y_0)=0.
\end{align*}
\medskip

\noindent
With \eqref{pintro} $T_2$ becomes
\begin{align*}
T_2&=-\frac{t_0^2}{2}\,\left(\frac{b}{a}-1\right)^2\,p\cdot D^2u(z_0)\cdot p
-
t_0\,\left(\frac{b}{a}-1\right)\,p\cdot D^2u(z_0)\cdot (y_0-x_0)\\
&-
t_0\,\left(\frac{b}{a}-1\right)\,p\cdot D^2u(z_0)\cdot p
-
p\cdot D^2u(z_0)\cdot (y_0-x_0)
-
\frac{1}{2}\,p\cdot D^2u(z_0)\cdot p\\
&+
\left(\frac{b}{a}-1\right)\,p\cdot\nabla u(z_0).
\end{align*}
This is equivalent to
\begin{align*}
T_2&=
-
\frac{1}{2}\left(t_0\left[\frac{b}{a}-1\right]+1\right)^2\,p\cdot D^2u(z_0)\cdot p
-
\left(t_0\,\left[\frac{b}{a}-1\right]+1\right)\,p\cdot D^2u(z_0)\cdot (y_0-x_0)\\
\nonumber&
\qquad+
\left[\frac{b}{a}-1\right]p\cdot \xi_{0}\,\vert\nabla u(z_0)\vert.
\end{align*}
For $a\ne b$, the definition of $t_0$ yields
\begin{align}\label{T2p}
T_2=-\frac{b^2}{2}\,p\cdot D^2u(z_0)\cdot p
-
b\,p\cdot D^2u(z_0)\cdot (y_0-x_0)
+
\left[\frac{b}{a}-1\right]p\cdot \xi_{0}\,\vert\nabla u(z_0)\vert.
\end{align}
In case $a=b$ we obtain
\begin{align}\label{T2pa=b}
T_2=-\frac{1}{2}\,p\cdot D^2u(z_0)\cdot p
-
p\cdot D^2u(z_0)\cdot (y_0-x_0).
\end{align}
With this \eqref{optcond01} becomes
\begin{align}\label{optcond02}
0&\leq(t_k-t_0)^2
\biggl\{
T_2-(1-t_0)\,\Phi_{y}^{2}\cdot\xi_{0}\vert\nabla u(z_0)\vert
+
(a-t_0)\,\Phi_{x}^{2}\cdot\xi_{0}\vert\nabla u(z_0)\vert\\
\nonumber&+
\frac{1}{2}\frac{b^2}{a^2}\,p\cdot D^2u(x_0)\cdot p\biggr\}
+
O(\vert t_k-t_0\vert^3),
\end{align}
where we also used the fact that $\Phi_{x}^{2}\cdot\nabla u(x_0)=a\,\Phi_{x}^{2}\cdot\xi_{0}\vert\nabla u(z_0)\vert$.
\medskip

\noindent
Analogously \eqref{maximality2} becomes
\begin{align}\label{maximality3}
0&\geq(t_k-t_0)^2\biggl\{
T_2
+
(b+t_0-1)\Phi_{y}^{2}\cdot\xi_0\vert\nabla u(z_0)\vert
-
t_0\Phi_{x}^{2}\cdot\xi_{0}\vert\nabla u(z_0)\vert\\
\nonumber&+
\frac{1}{2}\,p\cdot D^2u(y_0)\cdot p
\biggr\}
+
O(\vert t_k-t_0\vert^3),
\end{align}
where $T_2$ is given in \eqref{T2p} for $a\ne b$  and in \eqref{T2pa=b} for $a=b$ .
\medskip

\noindent
For $p\in B_r(0)$ let $\epsilon(p)$ be a smooth function with $\epsilon(p)>0$ for $p\in B_{r}(0)\setminus\{0\}$ and $\epsilon(0)=0$. The precise expression for this function will be given later.
\medskip

\noindent
We choose $\Phi_{y}^{2}$ such that
\begin{align*}
\Phi_{y}^{2}\cdot\xi_{0}\vert\nabla u(z_0)\vert
=
\frac{1}{1-t_0}
\biggl\{
T_2
+
(a-t_0)\Phi_{x}^{2}\cdot\xi_{0}\vert\nabla u(z_0)\vert
+
\frac{1}{2}\,\frac{b^2}{a^2}\,p\cdot D^2u(x_0)\cdot p-\epsilon(p)
\biggr\}.
\end{align*}
Then \eqref{optcond02} and \eqref{maximality3}  become
\begin{align*}
0\leq
(t_k-t_0)^2\,
\epsilon(p)
+
O(\vert t_k-t_0\vert^3)\quad\hbox{for}\:p\in B_r(0)
\end{align*}
and
\begin{align*}
0&\geq(t_k-t_0)^2
\biggl\{
\frac{b}{1-t_0}\,T_2
+
\left(\left[\frac{b}{1-t_0}-1\right](a-t_0)-t_0\right)\Phi_{x}^{2}\cdot\xi_{0}\vert\nabla u(z_0)\vert\\
&+
\frac{1}{2}\,p\cdot D^2u(y_0)\cdot p
+
\frac{1}{2}\,\frac{b^2}{a^2}\left(\frac{b}{1-t_0}-1\right)\,p\cdot D^2u(x_0)\cdot p
-
\left(\frac{b}{1-t_0}-1\right)\,\epsilon(p)
\biggr\}\\
&+
O(\vert t_k-t_0\vert^3),
\end{align*}
respectively.
We compute the coefficient of $\Phi_{x}^{2}\cdot\xi_{0}\vert\nabla u(z_0)\vert$:
\begin{align*}
\left(\left[\frac{b}{1-t_0}-1\right](a-t_0)-t_0\right)
=
\frac{1}{1-t_0}\left(a(b-1)+(a-b)t_0\right).
\end{align*}
From Lemma \ref{optcond5} (ii) and (iii) we deduce that $\left(a(b-1)+(a-b)t_0\right)=0$ for any $a$ and $b$. Thus the maximality condition \eqref{maximality3} reduces to
\begin{align}\label{maxmaria4}
0&\geq(t_k-t_0)^2 W(p)
+
O(\vert t_k-t_0\vert^3)
\end{align}
for
\begin{align*}
W(p)&\coloneqq\frac{b}{1-t_0}\,T_2
+
\frac{1}{2}\,p\cdot D^2u(y_0)\cdot p
+
\frac{1}{2}\,\frac{b^2}{a^2}\left(\frac{b}{1-t_0}-1\right)\,p\cdot D^2u(x_0)\cdot p\\
&\qquad-
\left(\frac{b}{1-t_0}-1\right)\,\epsilon(p).
\end{align*}
Clearly $W(0)=0$ (which is also a consequence of \eqref{T2p}, \eqref{T2pa=b} and $\epsilon(0)=0$).
We will have succeeded in contradicting the existence of a positive extremal if we can find $r>0$ sufficiently small and any  function $\epsilon(p)$ that is strictly positive on $B_r(0)\setminus\{ 0\}$ and at the same time a
$p\in B_r(0)\setminus\{ 0\}$ such that $W(p)>0$, since we will then have found admissible variations that violate \eqref{maxmaria4}.
\medskip

\noindent
We distinguish between $a\ne b$ and $a=b=1$.
\medskip

\noindent
The Case $a\ne b$: The explicit formula $t_0=a\,\frac{1-b}{a-b}$ (see Lemma \ref{optcond5} (iii)) yields
\begin{align*}
\frac{b}{1-t_0}=\frac{a-b}{a-1}>0\quad\hbox{and}\quad\frac{b}{1-t_0}-1=\frac{1-b}{a-1}>0,
\end{align*}
since either $a<1$ and $b>1$ or $a>1$ and $b<1$.
\medskip

\noindent
Let $\Delta_{p}$ be the Laplace operator, where the differentiation is with respect to the $p$ - variable.
With \eqref{T2p} we compute
\begin{align*}
\Delta_{p}T_2=-b^2\,\Delta u(z_0)
\end{align*}
Thus
\begin{align*}
\Delta_{p}W(p)
=
-b^2\frac{a-b}{a-1}\,\Delta u(z_0)
+
\Delta u(y_0)
+
\frac{b^2}{a^2}\,\frac{1-b}{a-1}\Delta u(x_0)
-
\frac{1-b}{a-1}\Delta_{p}\epsilon(p).
\end{align*}
Since $u$ solves $\Delta u=f(u)$ and since $u(x_0)=u(z_0)$ (see Lemma \ref{optcond1}) we obtain
\begin{align}
\Delta_{p}W(p)
&=&
f(u(y_0))-f(u(z_0))
+
f(u(z_0))\left\{1-b^2\,\frac{a-b}{a-1}+\frac{b^2}{a^2}\,\frac{1-b}{a-1}\right\}
-
\frac{1-b}{a-1}\,\Delta_{p}\epsilon(p)\notag\\
&=&
f(u(y_0))-f(u(z_0))
+
\frac{1}{a^2}f(u(z_0))\,(1-b)(a-b)(ab+a+b)
-
\frac{1-b}{a-1}\,\Delta_{p}\epsilon(p).\label{delp}
\end{align}
Observe that
\begin{align*}
(1-b)(a-b)(ab+a+b)>0\qquad\hbox{and}\qquad\frac{1-b}{a-1}>0.
\end{align*}
Since we assume \eqref{convexass2} and {\bf{(H2)}} we deduce $f(u(y_0))-f(u(z_0))>0$ and $f(u(z_0))>0$ and hence
\begin{align}\label{H2as}
f(u(y_0))-f(u(z_0))
+
\frac{1}{a^2}f(u(z_0))\,(1-b)(a-b)(ab+a+b)=:\gamma_0>0.
\end{align}
Substituting \eqref{H2as} together with
\begin{align*}
\epsilon(p):=\frac{\gamma_0}{4n}\,\frac{a-1}{1-b}\,\vert p\vert^2
\end{align*}
into \eqref{delp} yields
\begin{align*}
W(0)=0\qquad\hbox{and}\qquad\Delta_p W(p)=\frac{\gamma_0}{2}>0.
\end{align*}
It follows that $\tilde{W}(p):=W(p)-\frac{\gamma_0}{4n}\vert p\vert^2$ is subharmonic in $B_r(0)$.
The mean value formula for subharmonic functions then implies
\begin{align*}
\sup_{p\in B_r(0)}W(p)\geq \frac{\gamma_0}{4n(n+2)} r^2.
\end{align*}
Hence we have obtained the positivity of $W(p)$ and thus a contradiction.
\medskip

\noindent
The Case $a=b=1$: In this case $\frac{b}{1-t_0}-1=\frac{t_0}{1-t_0}>0$.
Proceeding as before we obtain
\begin{align*}
W(p):=\frac{1}{1-t_0}\,T_2
+
\frac{1}{2}\,p\cdot D^2u(y_0)\cdot p
+
\frac{1}{2}\left(\frac{t_0}{1-t_0}\right)\,p\cdot D^2u(x_0)\cdot p-
\left(\frac{t_0}{1-t_0}\right)\,\epsilon(p).
\end{align*}
With \eqref{T2pa=b} we compute
\begin{align*}
\Delta_{p}T_2=-\Delta u(z_0).
\end{align*}
Then
\begin{align*}
\Delta_{p}W(p)=f(u(y_0))-f(u(z_0))-\frac{t_0}{1-t_0}\Delta_{p}\epsilon(p).
\end{align*}
This leads as above to the existence of $p$ such that $W(p)>0$.
Note that if $a=b=1$, then the assumption $f(c_{*})\geq 0$ in (H2) is not necessary.
\subsubsection{The case $t_0=1$}\label{sec3}
In case $t_0=1$ ($x_0=z_0$), the argument proceeds somewhat differently. We begin with the proof of Lemma \ref{optcond007}. Then we give an auxiliary lemma (Lemma \ref{optcond12}) and use these together to establish Lemma \ref{optcond22}. Finally we include a higher order optimality condition (Lemma \ref{optcond13}) which will be necessary for our contradiction argument in this case.

We recall the expansions \eqref{optcond0}, \eqref{maximality} generated by \eqref{xk}, \eqref{yk}. In the case $t_0=1$, we will need to also keep the third order term,  so that the expansion (in light of $t_0=1$ ($x_0=z_0$), $u(x_0)=u(z_0)$, and $(y_0-x_0)\cdot \nabla u(x_0)=0$) takes the form:
\begin{align}\label{optcond16}
0 &\overset{!}{<} u(x_k)-u(z_k)\notag\\
&= (1-t_k)^2\biggl\{\left(\Phi_{x}^{1}-\Phi_{y}^{1}\right)\cdot\nabla u(z_0)
-
\frac{1}{2}(y_0-z_0)\cdot D^2u(z_0)\cdot(y_0-z_0)-\Phi_{x}^{1}\cdot D^2u(z_0)\cdot(y_0-z_0)
\biggr\}\notag\\
\nonumber&
-
(1-t_k)^3\Biggl\{
(y_0-z_0)\cdot
\biggl[\frac{1}{6}(y_0-z_0)\cdot D^3u(z_0)\cdot(y_0-z_0)
+
\frac{1}{2}\Phi_{x}^{1}\cdot D^3u(z_0)\cdot\Phi_{x}^{1}\\
&+
\frac{1}{2}\Phi_{x}^{1}\cdot D^3u(z_0)\cdot(y_0-z_0)
\biggr]
-
\Phi_{x}^{1}\cdot D^2u(z_0)\cdot\Phi_{x}^{1}
-
(y_0-z_0)\cdot D^2u(z_0)\cdot\Phi_{x}^{1}\\
\nonumber&
+
\Phi_{x}^{1}\cdot D^2u(z_0)\cdot\Phi_{y}^{1}
+
(y_0-z_0)\cdot D^2u(z_0)\cdot\Phi_{x}^{2}
+
(y_0-z_0)\cdot D^2u(z_0)\cdot\Phi_{y}^{1}
+
\Phi_{y}^{2}\cdot \nabla u(z_0)\\
\nonumber&
-
\Phi_{x}^{2}\cdot \nabla u(z_0)
\Biggr\}
+O\left((1-t_k)^4\right).
\end{align}
In that case there holds the necessary condition
\begin{align}
\label{optcond7}0&\geq u(y_k)-u(z_k)-(u(y_0)-u(z_0))\\
&=
\nonumber(1-t_k)\biggl\{\Phi_{y}^{1}\cdot\nabla u(y_0)-\Phi_{x}^{1}\cdot\nabla u(z_0)
\biggr\}\\
\nonumber&+
(1-t_k)^2\biggl\{-\frac{1}{2}(y_0-z_0)\cdot D^2u(z_0)\cdot(y_0-z_0)
-
\Phi_{x}^1\cdot D^2u(z_0)\cdot(y_0-z_0)\\
\nonumber&
-
\frac{1}{2}\Phi_{x}^1\cdot D^2u(z_0)\cdot\Phi_{x}^1
+
\left(\Phi_{x}^{1}-\Phi_{y}^{1}\right)\cdot\nabla u(z_0)
-
\Phi_{x}^2\cdot\nabla u(z_0)
+
\frac{1}{2}\Phi_{y}^1\cdot D^2u(y_0)\cdot\Phi_{y}^1\\
\nonumber&
+
\Phi_{y}^{2}\cdot\nabla u(y_0)
\biggr\}
+O\left((1-t_k)^3\right).
\end{align}
\begin{proof}[Proof of Lemma~\ref{optcond007} for $t_0=1$]
\,\newline
If $(y_0-z_0)\cdot D^2u(z_0)\cdot(y_0-z_0)<0$, then we can choose $\Phi_x^1=0$, $\Phi_y^1\perp\nabla u(z_0)$ and $\Phi_{xy}^2$ sufficiently small
such that \eqref{optcond16} is satisfied (the perturbed points are admissible) but \eqref{optcond7} is violated.

If on the other hand $(y_0-z_0)\cdot D^2u(z_0)\cdot(y_0-z_0)>0$, then we can choose $\Phi_x^1=-(y_0-z_0)$ and
\begin{align*}
\Phi_{y}^{1}:=\delta\,\frac{\nabla u(y_0)}{\vert\nabla u(y_0)\vert^2}
\end{align*}
for $\delta>0$ small enough so that \eqref{optcond16} is satisfied (the perturbed points are admissible) but \eqref{optcond7} is violated.
Thus $(y_0-z_0)\cdot D^2u(z_0)\cdot(y_0-z_0)=0$.
\end{proof}
\begin{lemma}\label{optcond12}
For any positive extremal $(x_0,y_0,z_0)$ with $t_0=1$ we have
$\nabla u(z_0)=\alpha\,D^2u(z_0)\cdot(y_0-z_0)$ for some $\alpha\in\R\setminus\{0\}$.
\end{lemma}
\begin{proof}
We assume there is no $\alpha\in\R\setminus\{0\}$ such that $\nabla u(z_0)=\alpha\,D^2u(z_0)\cdot(y_0-z_0)$, so that $\nabla u(z_0)$ and $D^2 u(z_0)\cdot (y_0-z_0)$ are linearly independent.
With Lemma  \ref{optcond007}, we write \eqref{optcond16} as
\begin{align}
0&\overset{!}<u(x_k)-u(z_k)=
(1-t_k)^2\biggl\{
\left(\Phi_{x}^{1}-\Phi_{y}^{1}\right)\cdot\nabla u(z_0)
-
\Phi_{x}^{1}\cdot D^2u(z_0)\cdot(y_0-z_0)
\biggr\}\\
&
+
O((1-t_k)^3).\label{437b}
\end{align}
Since $\nabla u(z_0)$ and $D^2 u(z_0)\cdot (y_0-z_0)$  are linearly independent, we can choose
$\Phi_{x}^{1}$ such that $\Phi_{x}^{1}\cdot\nabla u(z_0)=0$ and
$\Phi_{x}^{1}\cdot D^2u(z_0)\cdot(y_0-z_0)\ne 0$; in particular we can choose $\Phi_x^1$ such that
\begin{align*}
\Phi_{x}^{1}\cdot D^2u(z_0)\cdot(y_0-z_0)<0.
\end{align*}
Let $\Phi_{y}^{1}=\epsilon\,\nabla u(y_0)$ for $\epsilon>0$. Then
\begin{align*}
&\left(\Phi_{x}^{1}-\Phi_{y}^{1}\right)\cdot\nabla u(z_0)
-
\Phi_{x}^{1}\cdot D^2u(z_0)\cdot(y_0-z_0)\\
&\qquad=
-\epsilon\,\nabla u(y_0)\cdot\nabla u(z_0)
-
\Phi_{x}^{1}\cdot D^2u(z_0)\cdot(y_0-z_0)
>0
\end{align*}
for sufficiently small $\epsilon$, so that according to \eqref{437b}, the sequence is admissible.
The necessary condition \eqref{optcond7} then implies
\begin{align*}
0\geq\Phi_{y}^{1}\cdot\nabla u(y_0)-\Phi_{x}^{1}\cdot\nabla u(z_0)=\epsilon\,\vert\nabla u(y_0)\vert^2.
\end{align*}
This contradiction concludes the proof.
\end{proof}
Taking this into account we may rewrite \eqref{optcond16} and \eqref{optcond7} as
\begin{align}
\label{optcond8}
0\overset{!}<u(x_k)-u(z_k)&=(1-t_k)^2\biggl\{\left((1-\alpha^{-1})\Phi_{x}^{1}-\Phi_{y}^{1}\right)\cdot\nabla u(z_0)
\biggr\}+O\left((1-t_k)^3\right)
\end{align}
and
\begin{align}\label{optcond9}
0&\geq
(1-t_k)\biggl\{\Phi_{y}^{1}\cdot\nabla u(y_0)-\Phi_{x}^{1}\cdot\nabla u(z_0)
\biggr\}
+O\left((1-t_k)^2\right).
\end{align}
\begin{proof}[Proof of Lemma \ref{optcond22} in the case $t_0=1$]
$ $\newline
Choose any vector $\Phi_{x}^{1}$ with $\Phi_{x}^{1}\cdot\nabla u(z_0)=0$. Then for $k\geq k_0$ suffiently large \eqref{optcond8} $\Longrightarrow$ \eqref{optcond9} is equivalent to
\begin{align*}
-\Phi_{y}^{1}\cdot\nabla u(z_0)>0\quad\Longrightarrow\quad\Phi_{y}^{1}\cdot\nabla u(y_0)\leq 0.
\end{align*}
This can only be true if $\nabla u(z_0)$ and $\nabla u(y_0)$ point in the same direction.
As a consequence we can formulate \eqref{optcond8} $\Longrightarrow$ \eqref{optcond9} as:
\medskip

\noindent
Let $\Phi_{x}^{1},\Phi_{y}^{1}$ be any perturbations, satisfying
\begin{align}\label{nec20}
0<(1-t_k)^2\left\{\left(1-\frac{1}{\alpha}\right)\Phi_{x}^{1}\cdot\xi_{0}-\Phi_{y}^{1}\cdot\xi_{0}\right\}
+
O((1-t_k)^3),
\end{align}
then necessarily the inequality
\begin{align}\label{nec21}
0\geq \Phi_{y}^{1}\cdot\xi_{0}\,\vert\nabla u(y_0)\vert-\Phi_{x}^{1}\cdot\xi_{0}\,\vert\nabla u(z_0)\vert
+
O((1-t_k))
\end{align}
holds. Recalling the notation \eqref{ab},
the implication \eqref{nec20} $\Longrightarrow$ \eqref{nec21} can be expressed
\begin{align*}
Y<\left(1-\frac{1}{\alpha}\right)X\quad\Longrightarrow\quad bY\leq X\qquad\text{and hence}\qquad
\left(1-\frac{1}{\alpha}\right)\,b\,X\leq X.
\end{align*}
Since $X$ can have any sign we obtain
\begin{align}\label{b}
\left(1-\frac{1}{\alpha}\right)=\frac{1}{b}\qquad\text{and}\quad b\ne 1.
\end{align}
\end{proof}
\begin{lemma}\label{optcond13}
For any positive extremal $(x_0,y_0,z_0)$ with $t_0=1$ we have
\begin{align*}
(y_0-z_0)\cdot\left[(y_0-z_0)\cdot D^3u(z_0)\cdot(y_0-z_0)\right]= 0.
\end{align*}
\end{lemma}
\begin{proof}
1. Assume $(y_0-z_0)\cdot\left[(y_0-z_0)\cdot D^3u(z_0)\cdot(y_0-z_0)\right]<0$.
We set $\Phi_{x}^{1}=(y_0-z_0)$ and $\Phi_{y}^{1}=0$. With this choice and Lemmas \ref{optcond2} and \ref{optcond007}, the admissibility condition \eqref{optcond16} takes the form
\begin{align}
0&\overset{!}<u(x_k)-u(z_k)=-(1-t_k)^3\Biggl\{\frac{7}{6}(y_0-z_0)\cdot\left[(y_0-z_0)\cdot D^3u(z_0)\cdot(y_0-z_0)\right]\notag\\
&\quad+
(y_0-z_0)\cdot D^2u(z_0)\cdot\Phi_{x}^{2}+\Phi_{y}^{2}\cdot\nabla u(z_0)-\Phi_{x}^{2}\cdot\nabla u(z_0)
\Biggr\}
+
O((1-t_k)^4).\label{star}
\end{align}
With Lemmas \ref{optcond12}-\ref{optcond22} and \eqref{b} we compute
\begin{align}
\lefteqn{(y_0-z_0)\cdot D^2u(z_0)\cdot\Phi_{x}^{2}+\Phi_{y}^{2}\cdot\nabla u(z_0)-\Phi_{x}^{2}\cdot\nabla u(z_0)}\notag\\
&\qquad=
\left(\frac{1}{\alpha}\Phi_{x}^{2}\cdot\xi_{0}+\Phi_{y}^{2}\cdot\xi_{0}-\Phi_{x}^{2}\cdot\xi_{0}\right)\vert\nabla u(z_0)\vert=
\vert\nabla u(z_0)\vert\left[-\frac{1}{b}\Phi_{x}^{2}\cdot\xi_0+\Phi_{y}^{2}\cdot\xi_0\right].\label{comp}
\end{align}
Substituting \eqref{comp} into \eqref{star} yields the admissibility condition
\begin{align}\label{contra0}
\frac{7}{6}(y_0-z_0)\cdot\left[(y_0-z_0)\cdot D^3u(z_0)\cdot(y_0-z_0)\right]
+
\vert\nabla u(z_0)\vert\left[-\frac{1}{b}\Phi_{x}^{2}\cdot\xi_0+\Phi_{y}^{2}\cdot\xi_0\right]\leq 0
\end{align}
on $\Phi_{x}^{2}$ and on $\Phi_{y}^{2}$.
On the other hand the necessary condition \eqref{optcond7} becomes
\begin{align}
-\frac{1}{b}\Phi_{x}^{2}\cdot\xi_0+\Phi_{y}^{2}\cdot\xi_0\leq 0.\label{contra1}
\end{align}

Choosing $\Phi_{x}^{2}$ and $\Phi_{y}^{2}$ such that $-\frac{1}{b}\Phi_{x}^{2}\cdot\xi_0+\Phi_{y}^{2}\cdot\xi_0>0$ and $\vert\Phi_{x,y}^{2}\vert<r$ small enough such that \eqref{contra0} holds
leads to a contradiction.

\medskip

\noindent
2. Now assume $(y_0-z_0)\cdot\left[(y_0-z_0)\cdot D^3u(z_0)\cdot(y_0-z_0)\right]>0$. Using this assumption together with Lemmas
\ref{optcond2} and \ref{optcond007} in a Taylor-expansion for small, positive $s$, we deduce
\begin{align*}
  u(z_0-s(y_0-z_0))-u(z_0)<0.
\end{align*}
We want to use this to contradict optimality of $(x_0,y_0,z_0)$, since for
\begin{align}
  \tilde{x}_{0}=\tilde{z}_{0}=x_0-s(y_0-z_0),\label{xtil}
\end{align}
the  triple $(\tilde{x}_{0},y_0,\tilde{x}_{0})$ satisfies $u(y_0)-u(\tilde{z}_{0})>u(y_0)-u(z_0)$.

\medskip

\noindent

To this end, it suffices to show that $(\tilde{x}_{0},y_0,\tilde{x}_{0})$ is admissible, i.e., that there exists a sequence $({x}_{k},y_0,{z}_{k})$ such that $u({x}_{k})>u({z}_{k})$ and
\begin{align*}
\lim_{k\to\infty}({x}_{k},y_0,{z}_{k})=(\tilde{x}_{0},y_0,\tilde{x}_{0}).
\end{align*}
For this we set $\Phi_{y}^{1}=\Phi_{y}^{2}=\Phi_{x}^{2}=0$ and $\Phi_{x}^{1}= \xi_{0}$ and for a sequence $t_k\uparrow 1$ define
 $(x_k,y_0,z_k)$ as
\begin{align*}
x_k&=\tilde{x}_0+(1-t_k)\, \xi_{0},\\
z_k&=t_k x_k+(1-t_k)y_0=\tilde{x}_0 + (1-t_k)\,( \xi_{0} + y_0-\tilde{x}_0)-(1-t_k)^{2} \,\xi_{0}.
\end{align*}

Expanding jointly in $(1-t_k)$ and $s$ (recall \eqref{xtil}) and using  Lemmas \ref{optcond2} and \ref{optcond007}-\ref{optcond22}, we obtain
\begin{align*}
u(\tilde{x}_{k})-u(\tilde{z}_{k})=
\frac{s^3}{6}(y_0-z_0)\cdot[(y_0-z_0)\cdot D^3u(z_0)\cdot(y_0-z_0)]+O(s^4)+O(s(1-t_k)),
\end{align*}
so that for $s$ small and positive and $t_k$ sufficiently close to $1$, the sequence
$(\tilde{x}_{k},y_0,\tilde{z}_{k})$ is admissible.
\end{proof}
\paragraph{Contradiction to the optimality of $(x_0,y_0,z_0)$ in the case $t_0=1$.}\label{sec4}

Assume that $(x_0,y_0,z_0)$ is a positive extremal (cf. Definition~\ref{def:max}).
In a first step we apply Lemmas \ref{optcond2} - \ref{optcond13} and \eqref{b} to the inequalities \eqref{optcond16} and \eqref{optcond7}. Defining
\begin{align*}
T_1&:=\vert\nabla u(z_0)\vert\left(\frac{1}{b}\Phi_{x}^{1}\cdot\xi_{0}-\Phi_{y}^{1}\cdot\xi_{0}\right)\\
T_2&:= -(y_0-z_0)\cdot\left[\frac{1}{2}\Phi_{x}^{1}\cdot D^3u(z_0)\cdot\Phi_{x}^{1}+\frac{1}{2}\Phi_{x}^{1}\cdot D^3u(z_0)\cdot(y_0-z_0)\right]\\
T_3&:=\vert\nabla u(z_0)\vert\left(\Phi_{x}^{1}\cdot\xi_{0}-\Phi_{y}^{1}\cdot\xi_{0}\right)\\
T_4&:=\vert\nabla u(z_0)\vert\left(\frac{1}{b}\Phi_{x}^{2}\cdot\xi_{0}-\Phi_{y}^{2}\cdot\xi_{0}\right),
\end{align*}
we can write the admissibility condition \eqref{optcond16} in the form
\begin{align}
\label{new16} 0&\overset{!}{<}u(x_k)-u(z_k)=
(1-t_k)^2\,T_1\\
\nonumber&+
(1-t_k)^3\left\{T_2
+
\Phi_{x}^{1}\cdot D^2u(z_0)\cdot\Phi_{x}^{1}
-
\Phi_{x}^{1}\cdot D^2u(z_0)\cdot\Phi_{y}^{1}
+
\frac{1}{\alpha}T_3+T_4\right\}
+
O((1-t_k)^4)
\end{align}
and the necessary condition  \eqref{optcond7} as
\begin{align}\label{new17}
0&\geq u(y_k)-u(z_k)-(u(y_0)-u(z_0))=
-(1-t_k)\,b\,T_1\\
\nonumber&+
(1-t_k)^2\left\{
\frac{1}{b}\,T_3
-
\frac{1}{2}\Phi_{x}^{1}\cdot D^2u(z_0)\cdot\Phi_{x}^{1}
+
\frac{1}{2}\Phi_{y}^{1}\cdot D^2u(y_0)\cdot\Phi_{y}^{1}
-
b\, T_4\right\}
+
O((1-t_k)^3).
\end{align}
We will be done if we can show that there exist perturbations $\Phi_{x,y}^{1}$ and $\Phi_{x,y}^{2}$ that satisfy \eqref{new16} but fail to satisfy \eqref{new17}.
\medskip

\noindent
Let $r>0$. For $p\in B_r(0)$  we set
\begin{align*}
\Phi_{x}^{1}:=b\,p\qquad\Phi_{y}^{1}:=p.
\end{align*}
Then $T_1=0$ and
\begin{align*}
T_2&=
 -(y_0-z_0)\cdot\left[\frac{b^2}{2}p\cdot D^3u(z_0)\cdot p+\frac{b}{2}p\cdot D^3u(z_0)\cdot(y_0-z_0)\right],\\
T_3&= (b-1)\,\vert\nabla u(z_0)\vert\,p\cdot\xi_{0}.
\end{align*}
Next we choose $\Phi_{x}^{2}$ and $\Phi_{y}^{2}$ such that
\begin{align*}
T_4
=
\epsilon(p)
-
T_2
-
b^2\,p\cdot D^2u(z_0)\cdot p
+
b\,p\cdot D^2u(z_0)\cdot p
-
\frac{1}{\alpha}T_3
\end{align*}
for some function $\epsilon(p)$ with $\epsilon(0)=0$ and $\epsilon(p)>0$ for $p\in B_{r}(0)\setminus\{0\}$. Then for $p\ne 0$ the perturbed sequence satisfies the admissibility condition \eqref{new16}, since
\begin{align*}
0<u(x_k)-u(z_k)=
(1-t_k)^3\epsilon(p)
+
O((1-t_k)^4).
\end{align*}
\medskip

\noindent
In the next step we insert the expression for $T_4$ into \eqref{new17}. Then maximality of $(x_0,y_0,z_0)$ implies that the following inequality must hold:
\begin{align*}
0&\geq u(y_k)-u(z_k)-(u(y_0)-u(z_0))\\
&=
(1-t_k)^2\left\{
-b\,\epsilon(p)
+
\left(\frac{1}{b}+b-1\right)\,T_3
+
b\,T_2
+
\left(b^3-\frac{3}{2}b^2\right)\,p\cdot D^2u(z_0)\cdot p
+
\frac{1}{2}p\cdot D^2u(y_0)\cdot p\right\}\\
&\qquad+
O((1-t_k)^3).
\end{align*}
We will be done if we can show that this last inequality does not hold.
\medskip

\noindent
Let
\begin{align*}
W(p):=-b\,\epsilon(p)
+
\left(\frac{1}{b}+b-1\right)\,T_3
+
b\,T_2
+
\left(b^3-\frac{3}{2}b^2\right)\,p\cdot D^2u(z_0)\cdot p
+
\frac{1}{2}p\cdot D^2u(y_0)\cdot p.
\end{align*}
Clearly $W(0)=0$. We will compute $\Delta_{p}W(p)$:
\medskip

\noindent
Since $T_3$ depends linearily on $p$ we obtain $\Delta_{p} T_3=0$. Moreover
\begin{align*}
\Delta_{p} \left(p\cdot D^2u(z_0)\cdot p\right)=2\,\Delta u(z_0)=2\,f(u(z_0)),\quad
\frac{1}{2}\Delta_{p} \left(p\cdot D^2u(y_0)\cdot p\right)=\Delta u(y_0)=f(u(y_0)),
\end{align*}
and
\begin{align*}
\Delta_{p} T_2=-b^2(y_0-z_0)\cdot \nabla u(z_0)=0
\end{align*}
by Lemma \ref{optcond2}. Thus
\begin{align*}
\Delta_{p}W(p)=-b\Delta_{p}\epsilon(p)+\left(2b^3-3b^2+1\right)\,f(u(z_0))+f(u(y_0))-f(u(z_0)).
\end{align*}
We now define $\epsilon(p)$ as
\begin{align*}
\epsilon(p):=\frac{\epsilon_0}{2n\,b}\vert p\vert^2.
\end{align*}
Note that $2b^3-3b^2+1\geq0$ with only one zero in $b=1$ (and recall according to \eqref{b} that $b\neq 1$). Since $f(u(y_0))-f(u(z_0))>0$ and since $f(u(z_0))\geq 0$ by {\bf{(H2)}} we obtain
\begin{align*}
\Delta_{p}W(p)=-\gamma_{0}+f(u(y_0))-f(u(z_0))>0
\end{align*}
if $\gamma_{0}$ is chosen small enough. For such a choice we set $\gamma:=-\gamma_{0}+f(u(y_0))-f(u(z_0))>0$. Hence, $W(p)-\frac{\gamma}{2n}\vert p\vert^2$ is subharmonic in $B_r(0)$ and the mean value formula for subharmonic functions
implies
\begin{align*}
\sup_{p\in B_r(0)}W(p)\geq c(\gamma)r^2.
\end{align*}
This is the desired contradiction.

\medskip

\noindent

\begin{bibdiv}
\begin{biblist}

\bib{BGLN}{article}{
   author={Bellettini, G.},
   author={Gelli, M. S.},
   author={Luckhaus, S.},
   author={Novaga, M.},
   title={Deterministic equivalent for the Allen-Cahn energy of a scaling
   law in the Ising model},
   journal={Calc. Var. Partial Differential Equations},
   volume={26},
   date={2006},
   number={4},
   pages={429--445},}
\bib{BL}{article}{
   author={Berestycki, H.},
   author={Lions, P.-L.},
   title={Nonlinear scalar field equations. II. Existence of infinitely many
   solutions},
   journal={Arch. Rational Mech. Anal.},
   volume={82},
   date={1983},
   number={4},
   pages={347--375},}
\bib{BCK}{article}{
   author={Biskup, Marek},
   author={Chayes, Lincoln},
   author={Koteck\'{y}, Roman},
   title={On the formation/dissolutions of equilibrium droplets},
   journal={Europhys. Lett.},
   volume={60},
   date={2002},
   pages={21--27},}

 \bib{Br}{article}{
    AUTHOR = {Brock, Friedemann},
     TITLE = {Symmetry and monotonicity of solutions to some variational
              problems in cylinders and annuli},
   JOURNAL = {Electron. J. Differential Equations},
      YEAR = {2003},
     PAGES = {No. 108, 20},}
\bib{CS}{article}{
author={Caffarelli, Luis A.},
   author={Spruck, Joel},
   title={Convexity properties of solutions to some classical variational
   problems},
   journal={Comm. Partial Differential Equations},
   volume={7},
   date={1982},
   number={11},
   pages={1337--1379},}
\bib{CH}{article}{
author={Cahn, J.W.},
author={Hilliard, J.E.},
title={Free Energy of a Nonuniform System. I. Interfacial Free Energy},
journal={J. Chem. Phys.},
volume={28},
date={1958},
pages={258--267},}
\bib{CH3}{article}{
author={Cahn, J.W.},
author={Hilliard, J.E.},
title={Free Energy of a Nonuniform System. III. Nucleation in a Two Component Incompressible Fluid},
journal={J. Chem. Phys.},
volume={31},
date={1959},
pages={688--699},}
\bib{CCELM}{article}{
   author={Carlen, E. A.},
   author={Carvalho, M. C.},
   author={Esposito, R.},
   author={Lebowitz, J. L.},
   author={Marra, R.},
   title={Droplet minimizers for the Cahn-Hilliard free energy functional},
   journal={J. Geom. Anal.},
   volume={16},
   date={2006},
   number={2},
   pages={233--264},}
\bib{ERV}{article}{
author={E, W.},
author={Ren, W.},
author={Vanden-Eijnden, E.},
title={Minimum action method for the study of rare events},
journal={Comm.
Pure App. Math.},
volume={57},
date={2004},
pages={637--656},}
\bib{ERV2}{article}{
author={E,  W.},
author={Ren, W.},
author={Vanden-Eijnden, E.},
title={Simplified and improved string method for computing the minimum energy paths in barrier-crossing events},
journal={J. Chem. Phys.},
volume={126},
date={2007},
pages={164103},}
\bib{GW}{article}{
   author={Gelantalis, Michael},
   author={Westdickenberg, Maria G.},
   title={Energy barrier and $\Gamma$-convergence in the $d$-dimensional
   Cahn-Hilliard equation},
   journal={Calc. Var. Partial Differential Equations},
   volume={54},
   date={2015},
   number={1},
   pages={791--829}}
\bib{GWW}{article}{
   author={Gelantalis, Michael},
   author={Wagner, Alfred},
   author={Westdickenberg, Maria G.},
   title={Existence and properties of certain critical points of the
   Cahn-Hilliard energy},
   journal={Indiana Univ. Math. J.},
   volume={66},
   date={2017},
   number={5},
   pages={1827--1877},}
\bib{GWW20}{article}{
author={Gelantalis, Michael},
   author={Wagner, Alfred},
   author={Westdickenberg, Maria G.},
   title={Symmetry of constrained minimizers of the Cahn-Hilliard energy on
   the torus},
   journal={Nonlinear Anal.},
   volume={197},
   date={2020},
   pages={111842, 23},}
\bib{G}{book}{
author={Gibbs, J. W.},
title={The Collected Works of J. Willard Gibbs},
   publisher={Longmans, Green and Co., New York},
   date={1928},
}
\bib{GNN}{article}{
   author={Gidas, B.},
   author={Ni, Wei Ming},
   author={Nirenberg, L.},
   title={Symmetry and related properties via the maximum principle},
   journal={Comm. Math. Phys.},
   volume={68},
   date={1979},
   number={3},
   pages={209--243},}
\bib{K}{book}{
   author={Kawohl, Bernhard},
   title={Rearrangements and Convexity of Level Sets in PDE},
   series={Lecture Notes in Mathematics},
   volume={1150},
   publisher={Springer-Verlag, Berlin},
   date={1985},
   pages={iv+136},}
\bib{LN}{article}{
   author={Li, Yi},
   author={Ni, Wei-Ming},
   title={Radial symmetry of positive solutions of nonlinear elliptic
   equations in ${\bf R}^n$},
   journal={Comm. Partial Differential Equations},
   volume={18},
   date={1993},
   number={5-6},
   pages={1043--1054},}
\bib{L}{article}{
author={Lutsko, James F.},
title={Systematically extending classical nucleation theory},
journal={New Journal of Physics},
volume={20},
date={2018}
number={103015},
}

\bib{MM}{article}{
 author={Modica, Luciano},
   author={Mortola, Stefano},
   title={Il limite nella $\Gamma $-convergenza di una famiglia di
   funzionali ellittici},
   language={Italian, with English summary},
   journal={Boll. Un. Mat. Ital. A (5)},
   volume={14},
   date={1977},
   number={3},
   pages={526--529},}

\bib{NS}{article}{
   author={Novick-Cohen, Amy},
   author={Segel, Lee A.},
   title={Nonlinear aspects of the Cahn-Hilliard equation},
   journal={Phys. D},
   volume={10},
   date={1984},
   number={3},
   pages={277--298},}

\bib{e-zhou:2011}{article}{
  author	= {E, Weinan},
  author    = {Zhou, X.},
  journal	= {Nonlinearity},
  pages		= {1831},
  title		= {The gentlest ascent dynamics},
  volume	= {24},
  year		= {2011},}
\bib{levitt-ortner:2017}{article}{
  title		= {Convergence and {Cycling} in {Walker}-type {Saddle}
		  {Search} {Algorithms}},
  volume	= {55},
  issn		= {0036-1429},
  doi		= {10.1137/16M1087199},
  number	= {5},
  journal	= {SIAM Journal on Numerical Analysis},
  author	= {Levitt, Antoine},
  author    = {Ortner, Christoph},
  year		= {2017},
  keywords	= {65K10, energy surface, optimization, saddle search},
  pages		= {2204--2227}
}
\bib{gould-ortner-packwood:2016}{article}{
  title		= {A dimer-type saddle search algorithm with preconditioning
		  and linesearch},
  volume	= {85},
  issn		= {0025-5718, 1088-6842},
  doi		= {10.1090/mcom/3096},
  number	= {302},
  journal	= {Mathematics of Computation},
  author    = {Gould, N.},
  author	= {Ortner, C.},
  author    = {Packwood, D.},
  year		= {2016},
  keywords	= {convergence, dimer method, perconditioning, Saddle
		  search},
  pages		= {2939--2966}
}

\end{biblist}
\end{bibdiv}

\section*{Acknowledgements}
We thank Eric Vanden--Eijnden for helpful discussions and for encouraging our interaction. Sebastian Scholtes was partially supported by DFG Grant WE 5760/1-1.

\end{document}